\documentclass[a4paper, 11pt]{amsart}

\usepackage{enumerate}
\usepackage{mathrsfs}
\usepackage{amssymb}
\usepackage[all]{xy}
\usepackage{MnSymbol}

\title[]
{Global branching laws by global Okounkov bodies}
\author{Henrik Sepp\"anen*} 

\thanks{*supported by the DFG Priority Programme 1388 ``Representation 
Theory"}

\keywords{Branching laws, branching cone, GIT, Okounkov body}
\subjclass[2000]{}
\date{\today}

\address{Henrik Sepp\"{a}nen,
Mathematisches Institut,
Georg-August-Universit\"at G\"ottingen,
Bunsenstra\ss e 3-5, 
D-37073 G\"ottingen,
Germany}
\email{hseppaen@uni-math.gwdg.de}


\newcommand{\C}{\mathbb{C}}
\newcommand{\R}{\mathbb{R}}
\newcommand{\N}{\mathbb{N}}
\newcommand{\Z}{\mathbb{Z}}
\newcommand{\Q}{\mathbb{Q}}

\newcommand{\Oh}{\mathcal{O}}

\newtheorem{prop}{Proposition}[section]
\newtheorem{lemma}[prop]{Lemma}

\newtheorem{thm}[prop]{Theorem}
\theoremstyle{definition}
\newtheorem{defin}[prop]{Definition}
\newtheorem{rem}[prop]{Remark}

\begin{document}

\maketitle

\begin{abstract}
Let $G'$ be a complex semisimple group, and let $G \subseteq G'$ be 
a semisimple subgroup. We show that the branching cone of the pair $(G, G')$, 
which (asymptotically) parametrizes all pairs $(W, V)$ of irreducible finite-dimensional  
$G$-representations $W$ which occur as subrepresentations of a finite-dimensional 
irreducible $G'$-representation $V$, can be identified with the pseudo-effective 
cone, $\overline{\mbox{Eff}}(Y)$, of some GIT quotient $Y$ of the flag variety of 
the group $G \times G'$. Moreover, we prove that the quotient $Y$ is a Mori dream 
space.

As a consequence, the global Okounkov body $\Delta(Y)$ of $Y$, with respect to 
some admissible flag of subvarieties of $Y$, is fibred over the branching 
cone of $(G, G')$, and the fibre $\Delta(Y)_{(W, V)}$ over a point $(W, V)$ 
carries information about (the asymptotics of) the multiplicity of $W$ in $V$.
Using the global Okounkov body $\Delta(Y)$, we easily derive a 
multi-dimensional generalization of Okounkov's result about the log-concavity 
of asymptotic multiplicities. 
\end{abstract}

\section{Introduction}

Let $G'$ be a complex semisimple algebraic group, and let $G \subseteq G$ be 
a complex semisimple subgroup. 
We are interested in decomposing finite dimensional irreducible 
representations $V$ of $G'$ into irreducible $G$-representations; 
\begin{align*}
V=\bigoplus_j m_jW_j,
\end{align*}
where $W_j$ is an irreducible $G$-representation and 
$m_j=\mbox{dim}(\mbox{Hom}_G(W_j, V))$ is its multiplicity 
in $V$. 

By the Borel-Weil Theorem, we can reformulate the 
problem into a question concerning sections of line 
bundles as follows. Each finite-dimensional irreducible 
$G'$-representation can be realized geometrically 
as the space\\ $H^0(G'/B', L_\lambda)$ of all sections 
of a line bundle $L_\lambda$ over the flag variety $G'/B'$, 
where $B'$ is a Borel subgroup of $G'$, and the parameter 
$\lambda$ is a dominant weight with respect to a 
maximal torus $T' \subseteq B'$ and a given choice of 
positive roots. Likewise, the irreducible 
$G$-representations are realized as spaces of 
sections of line bundles over $G$-flag varieties, so 
that the representation-theoretic problem of determining 
subspaces of $G$-invariants in the tensor products $W_j^* \otimes V$ 
amounts to the geometric problem of determining the $G$-invariant sections 
in the spaces $H^0(G/B \otimes G'/B', L_\mu \otimes L_\lambda)$, 
where $B \subseteq G$ is a Borel subgroup of $G$, and $\mu$ is a dominant weight 
with respect to a maximal torus $T \subseteq G$ and 
a choice of positive roots for the root system of $G$. In other words, we 
are interested in $G$-invariant sections of line bundles $L$ over 
the flag variety $X:=(G \times G')/(B \times B')$.


If $L$ is ample we can form the GIT quotient $X^{ss}(L)//G$. Then there 
exists a $q \in \N$ and a  line bundle $L_0$ over $X^{ss}(L)//G$, 
such that we have isomorphisms 
\begin{align*}
H^0(X, L^{kq})^G \cong H^0(X^{ss}(L)//G, L_0^k)
\end{align*}   
for all $k \in \N$. In other words, we have translated the problem of 
finding $G$-invariant sections of powers of $L$ into the problem of studying the 
full linear series of a line bundle $L_0$ on the GIT-quotient $X^{ss}(L)//G$. 

Note that the quotient $X^{ss}(L)//G$ depends on the line bundle $L$. 
It would be desirable to be able to simultaneously study the $G$-invariant sections 
of \emph{all} line bundles over $X$ in terms of understanding all sections of 
all line bundles over some variety $Y$. 

In this note we use VGIT to construct a ``universal quotient", $Y$, of $X$ by $G$, 
yielding a geometric formulation of the problem of finding all branching 
laws for the pair $(G, G')$ in the following sense:
For every line bundle $L$ on $X$, there exists a $q \in \N$ and a line bundle 
$L_0$ on $Y$, such that 
\begin{align*}
H^0(X, L^{kq})^G \cong H^0(Y, L_0^k), \quad k \in \N.
\end{align*}  

In order to explain our main results in more detail, we recall that the 
\emph{branching cone} $\Gamma(G, G')$ of the 
pair $(G, G')$ is the closed convex cone (in the direct sum of two given 
Cartan subalgebras of the Lie algebras of $G$ and $G'$, respectively) 
generated by all pairs $(\mu, \lambda)$, where $\lambda$ is a dominant weight 
for $G'$, and $\mu$ is a dominant weight for $G$, such that 
the corresponding irreducible $G$-representation occurs in the 
irreducible $G'$-representation of highest weight $\lambda$.  
If a rational point $(\mu, \lambda)$ lies in $\Gamma(G, G')$ we 
thus know that for some $k \in \N$ the irredducible 
$G$-representation $W_\mu$ occurs as a $G$-subrepresentation in the 
irreducible $G'$-representation $V_\lambda$. However, $\Gamma(G, G')$ gives 
no information about the multiplicity of $W_\mu$ in $V_\lambda$. 

In this paper, we show how to contruct a cone which is fibred over 
$\Gamma(G, G')$, and whose fibres above points in $\Gamma(G, G')$ 
describe asymptotic multiplicities. We collect the main resuls 
of the paper in the following theorem.

\begin{thm}
There exists a cone $\Delta=\Delta_Y$, depending on the universal quotient $Y$, and 
a surjective linear map $p: \Delta \rightarrow \Gamma(G, G')$, such that\\

\noindent (i) For each rational point $(\mu, \lambda)$ in the interior of 
$\Gamma(G, G')$, 
\begin{align*}
\lim_{k \rightarrow \infty} \frac{\mbox{dim} \mbox{Hom}_G(W_{k \mu}, V_{k \lambda})}{k^n}
=\mbox{Vol}_n(p^{-1}(\mu, \lambda)),
\end{align*}
where $n$ is the dimension of the quotient $Y$, and the right hand side denotes the $n$-dimensional volume 
of the fibre $p^{-1}(\mu, \lambda) \subseteq \Delta_Y$.\\

\noindent (ii) The branching cone $\Gamma(G, G')$ is linearly isomorphic to the presudo-effective cone 
$\overline{\mbox{Eff}}(Y)$ of $Y$, and the cone $\Delta_Y$ is the global Okounkov body of $Y$ with 
respect to some admissible flag of subvarieties of $Y$.\\

\noindent (iii) The quotient $Y$ is a Mori dream space.

\end{thm}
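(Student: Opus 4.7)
\emph{Proof plan.} The strategy is to realise the universal quotient $Y$ as a GIT quotient whose Cox ring coincides with the $G$-invariant subring of $\mbox{Cox}(X)$, and then to apply the Hu--Keel characterisation of Mori dream spaces together with the Lazarsfeld--Mustata construction of the global Okounkov body. The flag variety $X=(G\times G')/(B\times B')$ is itself a Mori dream space with
\begin{align*}
\mbox{Cox}(X) = \bigoplus_{(\mu,\lambda)} H^0(X, L_{(\mu,\lambda)}),
\end{align*}
the sum ranging over pairs of dominant weights. By reductivity of $G$ and a Hilbert--Nagata argument, the multigraded invariant ring $R := \mbox{Cox}(X)^G$ is finitely generated, and Borel--Weil together with Frobenius reciprocity identifies its $(\mu,\lambda)$-piece with $\mbox{Hom}_G(W_\mu, V_\lambda)$ up to a duality by the longest Weyl element. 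In particular, the closure of the support of the grading on $R$ is exactly $\Gamma(G,G')$.

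Next, $Y$ is produced via VGIT: the $G$-ample cone in $\mbox{Pic}^G(X)_\Q$ carries a finite wall-and-chamber decomposition, and quotients for neighbouring chambers are related by small $\Q$-factorial modifications. I would choose a chamber meeting the interior of $\Gamma(G,G')$ in maximal dimension, let $L$ lie generically in it, and set $Y := X^{ss}(L)/\!/G$. Genericity ensures that $Y$ is normal, projective and $\Q$-factorial. Descent of $G$-linearised line bundles along $X^{ss}(L)\to Y$ identifies a rational subspace of $\mbox{Pic}^G(X)_\Q$ with $\mbox{Pic}(Y)_\Q$, and the Kempf--Ness isomorphism $H^0(Y, L_0^k)\cong H^0(X, L^{kq})^G$ quoted in the excerpt then recovers sections for line bundles in the chosen chamber. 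Running the same descent across all the other chambers--whose quotients are small $\Q$-factorial modifications of one another and therefore share a common Cox ring--yields $\mbox{Cox}(Y)\cong R$.

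With this in hand, the three assertions follow quickly. Finite generation of $\mbox{Cox}(Y)$, combined with Hu--Keel, proves (iii). The pseudo-effective cone $\overline{\mbox{Eff}}(Y)$ is the closure of the support of $\mbox{Cox}(Y)$ and hence coincides with $\Gamma(G,G')$; this is the first half of (ii). For the second half, fix an admissible flag on $Y$ and apply the Lazarsfeld--Mustata global Okounkov body theorem for Mori dream spaces: it produces a closed convex cone $\Delta_Y \subseteq \mbox{NS}(Y)_\R \oplus \R^n$ equipped with a surjective linear projection $p$ onto $\overline{\mbox{Eff}}(Y)\cong\Gamma(G,G')$, whose fibre over a big class is the Okounkov body of that class. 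Statement (i) is then immediate: the $n$-dimensional volume of $p^{-1}(\mu,\lambda)$ equals $\frac{1}{n!}$ times the volume of the associated class on $Y$, which is the leading coefficient in $k^n$ of
\begin{align*}
\mbox{dim}\,H^0(Y, L_0^k) = \mbox{dim}\,H^0(X, L_{(k\mu,k\lambda)})^G = \mbox{dim}\,\mbox{Hom}_G(W_{k\mu}, V_{k\lambda}).
\end{align*}

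The main obstacle is Step~2: one must exhibit a \emph{single} $Y$ whose line bundle geometry realises the $G$-invariant sections of \emph{every} line bundle on $X$, not merely those in the chosen chamber. This requires a careful analysis of the VGIT wall-crossings, of the descent of sections from non-selected chambers to $Y$ (after passing to a suitable power, as recorded by the integer $q$), and of the Kempf--Ness criterion. Once the identification $\mbox{Cox}(Y)\cong R$ is in place, assertions (ii) and (iii) reduce to standard Mori dream space machinery and (i) to the defining property of the global Okounkov body.
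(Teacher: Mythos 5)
Your architecture (GIT quotient, descent of line bundles, Hu--Keel, Lazarsfeld--Musta\c{t}\u{a}) matches the paper's, and you correctly identify where the difficulty lies; but the step you flag as ``the main obstacle'' is genuinely left open, and the mechanism you sketch for closing it is not the one that works. You propose to realise $\mbox{Cox}(Y)\cong\mbox{Cox}(X)^G$ by ``running the descent across all the other chambers,'' using that quotients of neighbouring chambers are small $\Q$-factorial modifications sharing a common Cox ring. This does not by itself produce the isomorphism $H^0(Y,\mathcal{O}_Y(F_Y))\cong H^0(X,\mathcal{O}_X(F))^G$ for classes $F$ outside your chosen chamber: you still need (a) that \emph{every} integral divisor on $X$ admits a multiple descending to the \emph{single} quotient $Y$, and (b) that $G$-invariant sections over $X^{ss}(L)$ extend to $X$. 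For (a) the paper does not cross walls at all: since in a chamber semistable equals stable, all stabilizers $G_x$ on $X^{ss}(L)$ are finite, and Kempf's descent criterion then gives descent of $\mathcal{O}_X(mE)$ for $m$ a common multiple of the orders of the (finitely many conjugacy classes of) stabilizers --- this is Theorem \ref{T: descent}(ii). For (b) one needs either a Riemann-extension argument via the moment map (Remark \ref{R: riemanninv}) or, as in the paper's main line, normality of $X$ together with the unstable locus having codimension at least two.

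That codimension condition is the second concrete gap. Your ``generic $L$ in a chamber'' does not guarantee that $X\setminus X^{ss}(L)$ contains no divisor, and without it the restriction map $\mbox{Pic}(X)\rightarrow\mbox{Pic}(X^{ss}(L))$ has a kernel (generated by the divisorial components of the unstable locus), so the map $\sigma$ identifying $V^G(X)$ with $N^1(Y)_\R$ --- and hence $C^G(X)=\Gamma(G,G')$ with $\overline{\mbox{Eff}}(Y)$ in part (ii) --- is not well defined. The paper devotes Proposition 4.1 to manufacturing such a $D$: one strips the divisorial part of the unstable locus off an arbitrary $D'$ using invariant defining sections $\xi_i$ (here semisimplicity of $G$ is used to make the $\xi_i$ invariant), combines two such divisors from disjoint faces to land in the interior of $C^G(X)$, and then perturbs into a chamber while checking the codimension condition survives. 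Once (a) and (b) and the codimension-two property are in place, your remaining deductions --- finite generation of $\mbox{Cox}(Y)$ via Gordan's lemma and semi-ampleness on $X$, the Hu--Keel criterion for (iii), and the slice/volume identities $p_2^{-1}(E)\cap\Delta_{Y_\bullet}(Y)=\Delta_{Y_\bullet}(E)$ and $\lim_k \dim H^0(Y,\mathcal{O}_Y(kE))/k^n=\mbox{Vol}_n(\Delta_{Y_\bullet}(E))$ for (i) and (ii) --- are correct and essentially identical to the paper's.
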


We would like to end this introduction with a brief history of the branching cone.
It was proven by Brion and Knop (cf. \cite{E}) that the semigroup of all pairs 
$(\mu, \lambda)$ of dominant weights which generate the branching cone 
is a finitely generated semigroup of the group of all integral weights 
of the respective Cartan subalgebra of the Lie algebra 
$\mathfrak{g} \oplus \mathfrak{g'}$ of $G \times G'$. 
Berenstein and Sjamaar (\cite{BS}) obtained a list of inequalities determining the branching 
cone, which, however, is redundant. Later, Ressayre (\cite{R10}) obtained a minimal list of 
inequalities defining this cone, i.e., a list where none of the inequalities is redundant. 
We would like to remark that Ressayre also used the identification of the branching 
cone with the  $G$-ample cone of the flag variety $X=G/B \times G'/B'$. 
For a more thorough history of the branching cone, involving special cases, we 
refer to \cite{R10}. 

As a final remark on branching cones, we point out that 
in general $G'$ and $G$ are only assumed to be reductive, and not semisimple, whereas
we have chosen the more restrictive semisimple setting. The reason is not merely 
technical, as in the use of the (essential) uniqueness of the moment map 
in Section 3, but also since we do not expect the result on the 
identification of the branching cone with a pseudo-effective cone to 
hold in the more general setting (cf. Remark \eqref{R: coneid}).

Finally, we would like to mention a couple of other approaches to 
the branching problem. First of all, instead of studying $G$-invariant sections 
of line bundles over the product $X$, one could study 
sections of line bundles over $G'/B'$, or indeed any projective 
$G$-variety, which are invariant under a maximal unipotent subgroup 
$U$ of $G$, and consider the Okounkov bodies defined using only 
$U$-invariants. This was the original approach in Okounkov's 
famous paper (\cite{Ok96}), and it was later generalized 
by Kaveh and Khovanskii (\cite{KK10}). (Our main reason 
for passing to quotients is that we want to get rid of 
all conditions of invariance under any subgroup in order 
to be able to use generic arguments, {\it e.g.} Bertini-type 
arguments (cf. Remark \ref{R: shapeok}).)

Secondly, C. Manon (\cite{M1}, \cite{M2}) has also addressed the problem 
of constructing cones above the branching cones 
using Okounkov bodies. However, these Okounkov bodies are 
of a more combinatorial nature than in our approach.\\

The paper is organized as follows. In Section 2 we introduce the setting and recall some 
results about the variation of GIT quotients; VGIT. In Section 3 we study the descent 
of line bundles over the flag variety $X$ to various GIT quotients of $X$. 
In Section 4 we contruct a GIT quotient $Y$ which is a Mori dream space, and 
we identify the pseudo-effective cone, $\overline{\mbox{Eff}}(Y)$, of $Y$ with 
the $G$-ample cone $C^G(X)$, i.e., with the branching cone for the 
pair $(G, G')$. Finally, in Section 5 we use global Okounkov bodies of $Y$ to 
study asymptotics of the function 
$k \mapsto \mbox{dim}\, \mbox{Hom}_G(W_{k\mu}, V_{k\lambda}), \, k \in \N$, 
for a fixed pair $(\mu, \lambda)$ in the branching cone of $(G, G')$. 
Using the global Okounkov body of $Y$, we easily derive a multi-dimensional generalization 
of a log-concavity result for multiplicities due to Okounkov (\cite{Ok96}).

{\bf Acknowledgement:} I would like to thank M. Brion for 
interesting discussions about GIT during the preparation of this 
paper, as well as for helpful remarks on a preliminary version. I am also 
grateful to V. Tsanov for comments. 

\section{Preliminaries}
Let $T \subseteq G$, $T' \subseteq G'$, and 
$B \subseteq G$ and $B' \subseteq G'$ be 
maximal tori, and Borel subgroups of $G'$ and $G$, respectively, such that 
$T \subseteq B$ and $T' \subseteq B'$. Let $\mathfrak{g}, \mathfrak{g}', \mathfrak{b}, \mathfrak{b}', 
\mathfrak{t}, \mathfrak{t}'$be the Lie algebras of the groups $G, G', B, B', T$, and $T'$. 
Fix a choice of roots $\mathcal{R}^+$ for the root system on $\mathfrak{g} \oplus \mathfrak{g}'$ with 
respect to the Cartan subalgebra $\mathfrak{t} \oplus \mathfrak{t}'$ such that 
the Borel subalgebra $\mathfrak{b} \oplus \mathfrak{b}'$ is the direct sum of 
$\mathfrak{t} \oplus \mathfrak{t}'$ and 
the sum of the negative root spaces, 
\begin{align*}
\mathfrak{b} \oplus \mathfrak{b}'=\mathfrak{t} \oplus \mathfrak{t}' \oplus \bigoplus_{\alpha \in -\mathcal{R}^+}
(\mathfrak{g} \oplus \mathfrak{g}')_\alpha.
\end{align*}
Let $X:=G/B \times G'/B'$ be 
the product of the corresponding flag varieties. Then $G$ acts on $X$ by 
the diagonal action
\begin{align*}
(f, (gB, hB')):=(fgB,fhB' ), \quad f, g \in G, \, h \in G'.
\end{align*}

Now, let $U \subseteq G$ and $U' \subseteq G'$ be maximal compact subgroups, 
such that the complex maximal tori $T$ and $T'$ are complexifications 
of maximal tori $T_\R \subseteq U$ and $T'_\R \subseteq U'$, respectively. 
Then the flag variety $X$ is naturally isomorphic as a complex manifold to the 
 quotient $(U \times U')/(T_\R \times T'_\R)$, and can thus also be realized 
as a coadjoint $(U \times U')$-orbit in the dual of Lie algebra 
$\mathfrak{u} \oplus \mathfrak{u}'$ of $U \times U'$.  

For a line bundle $L \rightarrow X$, and a section $s \in H^0(X, L)$, let 
$$X_s:=\{x \in X \mid s(x) \neq 0\}.$$
The set of \emph{semi-stable points} of $X$ with respect to $L$ is 
the set
\begin{align*}
X^{ss}(L):=\{x \in X \mid \exists m \in \N \, \exists s \in H^0(X, L^m) \,\,\mbox{such that}\, s(x) \neq 0\}.
\end{align*}
The set of \emph{stable} points of $X$ with respect to $L$ is the set
\begin{align*}
X^s(L)=\{x \in X^{ss}(L) \mid \mbox{the orbit}\, G.x \, \mbox{is closed in} \, X^{ss}(L) \, 
\mbox{and} \, G_x \,\mbox{is finite} \}
\end{align*}

\noindent Let $N^1(X) \cong \Z^{\mbox{dim}(T \times T')}$  be the N\'eron-Severi group of 
$X$, and let $N^1(X)_\R:=N^1(X) \otimes_\Z \R$. We recall the following definitions (cf. \cite{DH}).
\begin{defin}
(i) A line bundle $L \rightarrow X$ is \emph{$G$-effective},  if $X^{ss}(L) \neq \emptyset$ and 
$X_s \subseteq X$ is affine.\\

(ii) The {$G$-ample cone} $C^G(X)$ is the closed convex cone in $N^1(X)_\R$ generated by 
all $G$-effective ample line bundles. 
\end{defin}

\section{Descent of line bundles to quotients}

In this section we study the descent of line bundles to GIT quotients of $X$, in particular
its dependence on the location of the line bundle used to define semi-stability in the $G$-ample cone; 
more precisely, in terms of the \emph{GIT chambers} in $C^G(X)$. 
In doing so, although we have otherwise taken the algebro-geometric approach, we will use transcendental 
methods, notably the moment maps of the manifold $X$ with respect to various symplectic forms and 
the action of a maximal compact subgroup of $G$. In the following sections we will, however, only 
be interested in quotients defined by a \emph{chamber} in $C^G(X)$, as in Theorem \ref{T: descent} ii), 
and for this purpose the algebro-geometric approach is sufficient.\\

Let $\mathfrak{t}$ and $\mathfrak{t}'$ be the Lie algebras of $T$ and $T'$, respectively, 
and let $\mathcal{P}^+(\mathfrak{t} \oplus \mathfrak{t}')$ denote the set of 
dominant integral weights of the Lie algebra $\mathfrak{t} \oplus \mathfrak{t}'$
with respect to the positive system $\Phi^+$.

The \emph{Cox ring} $\mbox{Cox}(X)$, or \emph{total coordinate ring}, of $X$ 
is then the $\mathcal{P}^+(\mathfrak{t} \oplus \mathfrak{t})$-graded 
ring 
\begin{align*}
\mbox{Cox}(X)=\bigoplus_{\nu \in \mathcal{P}^+(\mathfrak{t} \oplus \mathfrak{t}) }
H^0(X, L_\nu),
\end{align*}
where $L_\nu:=(G \times G') \times_{\nu} \C$ is the 
line bundle on $X$ induced from the character of $B \times B'$ defined by 
$\nu \in (\mathfrak{t} \oplus \mathfrak{t})^*$. 
Since the multiplication map
\begin{align*}
H^0(X, L_\nu) \otimes H^0(X, L_{\nu'}) \rightarrow H^0(X, L_{\nu+\nu'})
\end{align*}
is surjective for all pairs of dominant weights $\nu$ and $\nu'$, 
the Cox ring of $X$ is finitely generated, namely by the $H^0(X, L_\nu)$, where 
$\nu$ is a fundamental weight. Hence, the 
subring of invariants 
\begin{align*}
\mbox{Cox}(X)^G:=\bigoplus_{\nu \in \mathcal{P}^+(\mathfrak{t} \oplus \mathfrak{t}) }
H^0(X, L_\nu)^G,
\end{align*}
where $H^0(X, L_\nu)^G$ denotes the subspace of $G$-invariant sections 
of $H^0(X, L_\nu)$, is also finitely generated. Moreover, the generators can be 
chosen to be homogeneous with respect to the $\mathcal{P}^+(\mathfrak{t} \oplus \mathfrak{t})$-grading. 
Thus, let $$s_i \in H^0(X, L_{\nu_i})^G, \, i=1,\ldots, r$$
be a set of homogeneous generators of $\mbox{Cox}(X)^G$.
For each $i \in \{1,\ldots, r\}$, let $D_i$ be a divisor with 
$\mathcal{O}_X(D_i) \cong L_{\nu_i}$. 

\begin{prop}
(i) The cone $C^G(X)$ is the convex cone in $N^1(X)_\Z$ generated by the divisors 
$D_1,\ldots, D_r$.\\

(ii) The cone $C^G(X)$ is of full dimension in $\mathfrak{t}^* \oplus \mathfrak({t}')^*$ if 
no non-zero ideal of the Lie algebra $\mathfrak{g}$ is an ideal in $\mathfrak{g}'$.
\end{prop}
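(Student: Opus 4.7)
The plan is to prove (i) by a direct Cox-ring computation and (ii) by a moment-map argument exploiting the convexity of the moment image. For the inclusion $C^G(X) \subseteq \mathrm{cone}(D_1, \ldots, D_r)$ in (i), take any $G$-effective ample line bundle $L$ and pick $m \in \N$ with a non-zero $s \in H^0(X, L^m)^G$. Writing $s$ as a polynomial in $s_1, \ldots, s_r$, every non-zero monomial $\prod_i s_i^{a_i}$ has weight $\sum_i a_i \nu_i$; homogeneity of $s$ of weight $m[L]$ forces $m[L] = \sum_i a_i \nu_i$, so $[L] \in \mathrm{cone}(D_i)$ in $N^1(X)_\Q$, and since $\mathrm{cone}(D_i)$ is closed (finitely generated) this inclusion persists after taking limits in the definition of $C^G(X)$.

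For the reverse inclusion in (i), each $\nu_i$ is dominant but not necessarily strictly dominant, so $L_{\nu_i}$ is nef but not always ample. Granted (ii), $\mathrm{cone}(\nu_j)$ contains a strictly dominant element $\nu_0$, and for every $\epsilon > 0$, $\nu_i + \epsilon \nu_0$ is strictly dominant (hence ample) and still lies in the $\Q_{\geq 0}$-span of the $\nu_j$'s, so admits a $G$-invariant section built from products of powers of the $s_j$. Thus $\nu_i + \epsilon \nu_0 \in C^G(X)$, and letting $\epsilon \to 0$ places $[D_i]$ in the closed cone $C^G(X)$.

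For (ii), I would realize $C^G(X)$ as a moment-map image. Equip $X \cong (U \times U')/(T_\R \times T'_\R)$ with the K\"ahler form $\omega_{(\mu,\lambda)}$ coming from strictly dominant integral $(\mu,\lambda)$; the diagonal $U$-moment map is $\Phi_{(\mu,\lambda)}(gT_\R, hT'_\R) = \mathrm{Ad}^*(g)(\mu) + \mathrm{Ad}^*(h)(\lambda)|_{\mathfrak{u}}$, and Kempf--Ness gives $(\mu,\lambda) \in C^G(X)$ if and only if $0 \in \mathrm{Image}(\Phi_{(\mu,\lambda)})$. I would argue full-dimensionality by contrapositive: if $C^G(X)$ were contained in a proper hyperplane of $\mathfrak{t}^* \oplus \mathfrak{t}'^*$, a non-trivial linear relation between $\mu$ and $\lambda$ would hold throughout $C^G(X)$; decomposing $\g' = \bigoplus_j \g'_j$ into simple ideals and applying Kirwan's convexity theorem to each factor $G'_j/B'_j$, this constraint would pull back to some simple $\g'_j$ on which the $G$-action is effectively trivial, forcing $\g'_j$ to be a shared ideal of $\g$ and $\g'$ --- contradicting the hypothesis. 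The main obstacle is the Lie-algebraic translation in this last step: one must rule out degenerate projections $\pi_j : \g \to \g'_j$ that are neither full nor give rise to a shared ideal, which I would handle by a case analysis ($\pi_j = 0$; $\pi_j(\g) \subsetneq \g'_j$; $\pi_j(\g) = \g'_j$ but $\g'_j \not\subseteq \g$), each of which leaves the $j$-th Cartan direction free in $C^G(X)$. The semisimplicity hypothesis on $\g$, emphasized in the introduction, enters essentially in this analysis through the uniqueness of the moment map and complete reducibility.
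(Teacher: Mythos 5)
Your argument for part (i) has the right two halves---the Cox-ring weight computation for the inclusion $C^G(X)\subseteq \mathrm{cone}(D_1,\ldots,D_r)$ is fine and makes explicit what the paper leaves implicit---but the reverse inclusion contains a genuine logical gap: you invoke part (ii) to produce a strictly dominant class $\nu_0$ in $\mathrm{cone}(\nu_j)$, yet (ii) is a \emph{conditional} statement (it requires that no non-zero ideal of $\mathfrak{g}$ be an ideal of $\mathfrak{g}'$), whereas (i) is asserted unconditionally. What you actually need, and what the paper spends essentially all of its proof establishing, is the unconditional fact that $C^G(X)$ contains at least one ample class: the paper chooses compatible Cartan and Borel subalgebras $\mathfrak{t}_0\subseteq\mathfrak{t}_0'$, $\mathfrak{b}_0\subseteq\mathfrak{b}_0'$, uses surjectivity of the restriction map $(\mathfrak{t}_0')^*\to(\mathfrak{t}_0)^*$ to find a regular dominant $\lambda_0$ restricting to a regular dominant $\mu_0$, and observes that the $G$-submodule generated by a highest weight vector of $V_{\lambda_0}$ realizes the $G$-representation of highest weight $\mu_0$ inside $V_{\lambda_0}$. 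This produces a regular (hence ample) $G$-effective class even when the hypothesis of (ii) fails---for instance for $G=G'$, where $C^G(X)$ is a diagonal, far from full-dimensional, but still meets the ample cone. Once one such class is in hand, your $\epsilon$-perturbation and passage to the closed cone is correct and coincides with the paper's ``approximation of nef divisors by ample divisors.''

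For part (ii) the paper simply cites Ressayre, so your moment-map/Kirwan-convexity route is a genuinely different attempt; as written, however, it is only an outline. The decisive step---that each of your three cases for the projection $\pi_j:\mathfrak{g}\to\mathfrak{g}_j'$ ``leaves the $j$-th Cartan direction free''---is exactly where the content lies, and it is asserted rather than proved. In particular the case $\pi_j(\mathfrak{g})=\mathfrak{g}_j'$ with $\mathfrak{g}_j'\not\subseteq\mathfrak{g}$ (e.g.\ the diagonal embedding $\mathfrak{g}\hookrightarrow\mathfrak{g}\oplus\mathfrak{g}$, where full-dimensionality of the tensor cone is already a nontrivial theorem) cannot be dispatched in a line. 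Either carry out that analysis in detail or, as the paper does, defer to the literature.
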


\begin{proof}
We first claim that $C^G(X)$ is nonempty. Indeed, we can choose Cartan 
subalgebras $\mathfrak{t}_0 \subseteq \mathfrak{g}, \mathfrak{t}'_0 \subseteq \mathfrak{g}'$ and 
Borel subalgebras $\mathfrak{b}_0 \subseteq \mathfrak{g}, \mathfrak{b}'_0 \subseteq \mathfrak{g}'$, 
containing the respective Cartan subalgebras, such that $\mathfrak{t}_0 \subseteq \mathfrak{t}_0'$ 
and $\mathfrak{b}_0 \subseteq \mathfrak{b}_0'$. 
The restriction of linear functionals then defines a surjective linear map 
$p: (\mathfrak{t}_0)^* \rightarrow (\mathfrak{t}'_0)^*$ such that 
the dominant chamber in $(\mathfrak{t}_0')^*$, with respect to the positive system 
defined by the choice of the Borel subalgebra $\mathfrak{b}_0'$, is mapped 
into the dominant chamber of $(\mathfrak{t}_0)^*$ defined by $\mathfrak{b}'_0$. 
Since $p$ is surjective, some regular dominant weight $\lambda_0 \in (\mathfrak{t}_0')^*$ 
is mapped to a regular dominant weight $\mu_0 \in (\mathfrak{t}_0)^*$. 
If $V_{\lambda_0}$ is the highest weight representation of $G'$ - with respect to the Borel subalgebra 
$\mathfrak{b}_0'$ - with highest 
weight $\lambda_0$, and $v_{\lambda_0} \in V_{\lambda_0}$ is a highest weight vector, then 
the $G$-submodule generated by $v_{\lambda_0}$ is a highest weight representation of $G$, 
with $v_{\lambda_0}$ being a highest weight vector of highest weight 
$p(\lambda_0)=\mu_0$. Hence, the pair $(\lambda_0, \mu_0)$ defines a 
pair of regular weights in the branching cone of $(G, G')$ (defined by this new choice of 
Borel subalgebras). The corresponding line 
bundle over $X$ is thus ample and admits $G$-invariant sections. This shows that 
the cone $C^G(X)$ is nonempty.

Since $X$ is a homogeneous variety, so that every effective divisor is nef 
(cf. \cite[Example 1.4.7]{Laz}), (i) now follows by
a straightforward approximation of nef divisors by ample divisors, using the fact that 
the ample $\R$-divisors in $C^G(X)$ form a dense subset of $C^G(X)$.\\

For (ii), we refer to \cite{R10}.
\end{proof}

Let $V^G(X) \subseteq \mbox{Pic}(X) \otimes_\Z \R \cong (\mathfrak{t}^* 
\oplus (\mathfrak{t}')^*)_\R$, 
where the subscript indicates that the right hand side is viewed as a vector space over $\R$,
be the real vector space generated by the cone $C^G(X)$.

We recall (\cite{DH}) that the equivalence relation $\sim$ on the set of 
equivalence classes of line bundles over $X$ defined 
by 
\begin{align*}
L_\nu \sim L_{\nu'} \quad \mbox{if and only if} \quad  X^{ss}(L_\nu)=X^{ss}(L_{\nu'})
\end{align*} 
extends to an equivalence relation on the $G$-ample cone $C^G(X)$. 
The equivalence classes are called $GIT$-equivalence classes. 
A special case of GIT-equivalence classes is given by  the \emph{chambers}.
\begin{defin}
A GIT-equivalence class $C \subseteq C^G(X)$ is called a \emph{chamber}, if 
$X^{ss}(\ell)=X^s(\ell)$, for every $\ell \in C$.
\end{defin}
Recall here that the notions of semi-stability and stability extend to $\R$-divisors 
(cf. \cite{DH}). By VGIT (cf. \cite{DH}, \cite{R00}) there are only finitely many 
GIT-equivalence classes. 
In particular, there  exists a GIT-equivalence class $C \subseteq C^G(X)$ with 
non-empty interior. In fact, chambers exist in our setting (cf. \cite[Cor. 4.1.9]{DH}).\\
 
Let $D \in C^G(X)$ be an ample integral divisor, $L=\mathcal{O}_X(D)$ the corresponding 
line bundle, and let $Y:=X^{ss}(L)//G$, with projection morphism 
$\pi: X^{ss}(L) \to Y$, be the GIT quotient defined 
by $L$. Recall that a line bundle $L_\mu$ is said to \emph{descend to a line bundle 
on $Y$} if the sheaf on $Y$ defined by 
\begin{align*}
\mathcal{F}(U):=H^0(\pi^{-1}(U), L_\mu)^G,
\end{align*}
for $U \subseteq Y$ open, is an invertible sheaf of $\mathcal{O}_Y$-modules, i.e.,
a line bundle over $Y$, and that the isomorphism
of $G$-line bundles
\begin{align*}
\pi^*(\mathcal{F}) \cong L_\mu
\end{align*}
holds on the open subset $X^{ss}(L)$ of $X$.
In particular, there exists a $q \in \N$, such that $L^q$ descends to 
a ample line bundle on $Y$ (cf. \cite{S95}). \\

In order to study descent of line bundles from $X$ to the quotient $Y$, we recall 
some facts about the structure of $X$ as a symplectic manifold.

If $\nu \in \mathcal{P}^+(\mathfrak{t} \oplus \mathfrak{t'})$ is any regular 
dominant weight, there is a natural identification of $X$ with 
the $(U \times U')$-coadjoint orbit $\mathcal{O}_\nu \subseteq (\mathfrak{u} \oplus \mathfrak{u}')^*$, 
and this inclusion map is the moment map for the $(U \times U')$-action 
with respect to the  Kirillov-Kostant-Souriau symplectic form $\omega_\nu$ on 
$\mathcal{O}_\nu$. The moment map $\Phi_\nu: \mathcal{O}_\nu \rightarrow \mathfrak{u}^*$ 
for the action of the subgroup $U$ on $\mathcal{O_\nu}$ is simply given be restriction of 
linear functionals to $\mathfrak{u}$, i.e., we have 
\begin{align*}
(\Phi_\nu(\lambda))(x)=\lambda(x), \quad \lambda \in 
\mathcal{O}_\nu \subseteq (\mathfrak{u} \oplus \mathfrak{u}')^*, \quad 
x \in \mathfrak{u}.
\end{align*}
If $L_\nu$ is the ample line bundle defined by $\nu$, and 
\begin{align*}
f_{L_\nu}: X \rightarrow \mathbb{P}(H^0(X, L_\nu)^*) 
\end{align*} 
is the associated projective embedding, the 
moment map $\Phi_\nu$ equals the pull-back, $\Phi_{L_\nu}$,  
by $f_{L_\nu}$ of the moment map defined by the Fubini-Study symplectic form on 
the projective space $\mathbb{P}(H^0(X, L_\nu)^*)$, up to a complex scalar 
factor. Since there zero sets of moment maps will be our main interest, we will 
therefore now assume that $\Phi_\nu=\Phi_{L_\nu}$.
If $L_\nu$ and $L_{\nu'}$ are ample line bundles with 
moment maps $\Phi_{L_{\nu}}$ and $\Phi_{L_{\nu'}}$, it follows from the 
above description of these moment maps in terms of the respective coadjoint 
orbits that $\Phi_{\nu}+\Phi_{\nu'}$ is the moment map for the 
tensor product line bundle $L_{\nu}\otimes L_{\nu'}=L_{\nu+\nu'}$. That is, 
the identity 
\begin{align*}
\Phi_{L_1 \otimes L_2 }=\Phi_{L_1}+\Phi_{L_2}
\end{align*} 
holds for all ample line bundles $L_1$ and $L_2$ on $X$.

Now, if $L_\nu$ is not ample, i.e., the weight $\nu$ is not regular, 
there exists a unique parabolic subgroup $P_\nu \subseteq G \times G'$ 
and an ample line bundle $\mathcal{L}_\nu \rightarrow (G \times G')/P_\nu$ 
such that $L_\nu=q^*\mathcal{L}_\nu$, where 
$q: (G \times G')/(B \times B') \rightarrow (G \times G')/P_\nu$ is the natural 
quotient map.  
We then define $\Phi_{L_\nu}:=q^*\Phi_{\mathcal{L}_\nu}$, where 
$\Phi_{\mathcal{L}_\nu}$ is the moment map for the action of $U$ 
on $(G \times G')/P_\nu$ defined by the 
projective embedding 
\begin{align*}
f_{\mathcal{L}_\nu}: (G \times G')/P_\nu \rightarrow 
\mathbb{P}(H^0((G \times G')/P_\nu, \mathcal{L}_\nu)^*).
\end{align*}
In this way we have now defined a map $\Phi_{L_\nu}: X \rightarrow \mathfrak{u}^*$
for each effective line bundle $L_\nu$ on  $X$

\begin{lemma} \label{L: linmomentmap}
The identity 
\begin{align*}
\Phi_{L_1 \otimes L_2}=\Phi_{L_1}+\Phi_{L_2}
\end{align*}
holds for all effective line bundles $L_1$ and $L_2$ on $X$.
\end{lemma}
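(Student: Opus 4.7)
The plan is to unwind the definition of $\Phi_{L_\nu}$ in the coadjoint-orbit picture and then exploit the linearity of the coadjoint action.

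First, I would extend the identification $X\cong\mathcal{O}_\nu$ used in the regular case to every dominant weight. Even when $\nu$ is not regular, the partial flag variety $(G\times G')/P_\nu$ is $(U\times U')$-equivariantly identified with the coadjoint orbit $\mathcal{O}_\nu\subseteq(\mathfrak{u}\oplus\mathfrak{u}')^*$ through $\nu$, and $\mathcal{L}_\nu$ realizes the Kirillov--Kostant--Souriau K\"ahler structure on this orbit. The same Borel--Weil/KKS argument already invoked for regular weights in the excerpt shows that, under the normalization fixed there, the Fubini--Study moment map $\Phi_{\mathcal{L}_\nu}$ induced by the embedding $f_{\mathcal{L}_\nu}$ coincides with the composition of the orbit inclusion $\mathcal{O}_\nu\hookrightarrow(\mathfrak{u}\oplus\mathfrak{u}')^*$ with restriction to $\mathfrak{u}^*$.

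Combining this with the definition $\Phi_{L_\nu}:=q^*\Phi_{\mathcal{L}_\nu}$ and the equivariance of the projection $q:X\to(G\times G')/P_\nu\cong\mathcal{O}_\nu$, I obtain the explicit formula
\[
\Phi_{L_\nu}(x)\;=\;\bigl((g,h)\cdot\nu\bigr)\big|_{\mathfrak{u}}
\]
whenever $x\in X$ is represented by $(g,h)\in U\times U'$ acting on the base point; every point of $X$ has such a representative. The lemma then reduces to the pointwise identity
\[
\bigl((g,h)\cdot(\nu_1+\nu_2)\bigr)\big|_{\mathfrak{u}}\;=\;\bigl((g,h)\cdot\nu_1\bigr)\big|_{\mathfrak{u}}+\bigl((g,h)\cdot\nu_2\bigr)\big|_{\mathfrak{u}},
\]
which is immediate from linearity of the coadjoint representation and of the restriction map $(\mathfrak{u}\oplus\mathfrak{u}')^*\to\mathfrak{u}^*$.

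The main obstacle is the first step: verifying that, on $(G\times G')/P_\nu$ for non-regular $\nu$, the Fubini--Study moment map built from $f_{\mathcal{L}_\nu}$ matches, up to the same scalar normalization already used in the regular case, the orbit inclusion. Once this is in place the additivity is automatic. An alternative route that partially sidesteps this is to pick a regular dominant $\nu_A$, establish the mixed-ampleness relations $\Phi_{L_{\nu_i}\otimes L_{\nu_A}^t}=\Phi_{L_{\nu_i}}+t\,\Phi_{L_{\nu_A}}$ and $\Phi_{L_{\nu_1+\nu_2}\otimes L_{\nu_A}^{2t}}=\Phi_{L_{\nu_1+\nu_2}}+2t\,\Phi_{L_{\nu_A}}$ (using that $L_{\nu_i}\otimes L_{\nu_A}^t$ and $L_{\nu_1+\nu_2}\otimes L_{\nu_A}^{2t}$ are ample), compare with the ample-case identity applied to $(L_{\nu_1}\otimes L_{\nu_A}^t)\otimes(L_{\nu_2}\otimes L_{\nu_A}^t)$, and cancel; the mixed case, however, still rests on the same coadjoint-orbit computation, so this does not really circumvent the substantive content of the first step.
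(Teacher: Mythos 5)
Your argument is correct and rests on the same geometric input as the paper's proof --- the identification of $(G\times G')/P_\nu$ with the coadjoint orbit $\mathcal{O}_\nu$ and the compatibility of the Fubini--Study structure induced by $f_{\mathcal{L}_\nu}$ with the Kirillov--Kostant--Souriau structure --- but you package it differently. You establish the closed pointwise formula $\Phi_{L_\nu}(x)=\bigl(\mathrm{Ad}^*(u)\nu\bigr)\big|_{\mathfrak{u}}$ for every dominant $\nu$ and read off additivity from the linearity of the coadjoint action and of the restriction map $(\mathfrak{u}\oplus\mathfrak{u}')^*\to\mathfrak{u}^*$. The paper instead works infinitesimally: it decomposes $\nu=\sum_i m_i\lambda_i$ into fundamental weights, shows that the KKS forms satisfy $\omega_\nu=\sum_i m_i\,q_i^*\omega_{\lambda_i}$, deduces that the contractions $d\Phi_{L_\nu}(\cdot)(\xi)$ of the two sides of the lemma agree for all $\xi\in\mathfrak{u}$, and then removes the resulting additive constant by connectedness of $X$ together with the uniqueness of moment maps for the semisimple group $U$. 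The step you flag as the main obstacle --- that $\Phi_{\mathcal{L}_\nu}$ equals the orbit inclusion restricted to $\mathfrak{u}$ under a $\nu$-independent normalization --- is precisely the pointwise strengthening of what the paper also uses without proof (namely that $f_{\mathcal{L}_\nu}^*$ of the Fubini--Study form is $\omega_\nu$ on the nose); given the uniqueness of moment maps for a semisimple group these two statements are equivalent, so neither route is more exposed on this point. Your version yields a more transparent formula for $\Phi_{L_\nu}$; the paper's version makes explicit where the semisimplicity of $U$ enters (fixing the constant of integration), which is worth stating even if you adopt the pointwise route.
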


\begin{proof}
Let  $\lambda_1,\ldots, \lambda_r \in (\mathfrak{t} \oplus \mathfrak{t'})^*$ be 
the fundamental weights of the root system of 
$(\mathfrak{g} \oplus \mathfrak{g}', \mathfrak{t} \oplus \mathfrak{t}')$. 
It then suffices to prove that 
\begin{align*}
\Phi_{L_\nu}=m_1\Phi_{L_{\lambda_1}}+\cdots+m_r\Phi_{L_{\lambda_r}}.
\end{align*}
 for each dominant weight $\nu \in \mathcal{P}^+(\mathfrak{t} \oplus 
\mathfrak{t}')$ which has the representation 
\begin{align}
\nu=m_1\lambda_1+\cdots+m_r \lambda_r,
\label{E: sumfundw}
\end{align}
with $m_1,.\ldots, m_r \geq 0$.

Recall that, for each dominant weight $\nu \in \mathcal{P}^+(\mathfrak{t} \oplus 
\mathfrak{t}')$, the complex manifold $(G \times G')/P_\nu$ is naturally 
identified with the coadjoint orbit $\mathcal{O}_\nu \subseteq (\mathfrak{u} \oplus \mathfrak{u}')^*$. 
The Kirillov-Kostant-Souriau symplectic 
form $\omega_\nu \in \Gamma(\mathcal{O}_\nu, \bigwedge^2 T^*(\mathcal{O}_\nu))$ 
is induced from the map 
\begin{align*}
T_\nu: \mathcal{O}_\nu &\rightarrow \bigwedge^2 (\mathfrak{u}\oplus \mathfrak{u}')^*,\\
T_\nu(\lambda)(x,y)&:=\lambda([x,y]), \quad x, y \in \mathfrak{u} \oplus \mathfrak{u}'. 
\end{align*}
Now, \eqref{E: sumfundw} yields
\begin{align}
&T_\nu(\mbox{Ad}^*(u)(\nu))(x,y) \nonumber\\
&=
(m_1\mbox{Ad}^*(u)(\lambda_1)([x,y])+\cdots +(m_r\mbox{Ad}^*(u)(\lambda_r)([x,y]), \quad u \in U.
\label{E: linkir}
\end{align}
If $P_{\lambda_i}$ is the maximal parabolic subgroup of $G \times G'$ 
defined by the fundamental weight $\lambda_i$ with $m_i>0$, let $q_i: (G \times G')/P_\nu 
\rightarrow X_i:=(G \times G')/P_{\lambda_i}$ be the associated quotient map. 
It follows from \eqref{E: linkir} that 
\begin{align}
\omega_{\nu}=\sum_{i=1}^r q_i^*\omega_{\lambda_i},
\label{E: lincurv}
\end{align} 
where $\omega_{\lambda_i}$ is the Kirillov-Kostant-Souriau symplectic form on 
$(G \times G')/P_{\lambda_i}$ defined by $\lambda_i$.

Now, each $q_i$ is $(G \times G')$-equivariant, and in particular $U$-equivariant. 
Thus, if $\xi \in \mathfrak{u}$ induces the vector field 
$\xi^X$ on $X$ and the vector field $\xi^{X_i}$ on $X_i$, we have 
\begin{align*}
(dq_i)(x) (\xi^X(x))=\xi^{X_i}(q_i(x)), \quad x \in X, 
\end{align*} 
so that
\begin{align*}
d\Phi_{L_{\lambda_i}}(x)(v,\xi)&=q_i^*(d\Phi_{\mathcal{L}_{\lambda_i}})(x)(v,\xi)\\
&=d\Phi_{\mathcal{L}_{\lambda_i}}(q_i(x))(dq_i(x)v, \xi^{X_i}(x))\\
&=\omega_{\lambda_i}(dq_i(x)v, \xi^{X_i}(x))\\
&=q_i^*\omega_{\lambda_i}(v, \xi^{X}(x)), 
\quad x \in X, \, v \in T_x(X), \, \xi \in \mathfrak{u}.
\end{align*}
Hence, using \eqref{E: lincurv}, it follows that 
\begin{align*}
d\Phi_{L_\nu}(x)(v, \xi^{X}(x))&=\omega_{\nu}(v, \xi^{X}(x))\\
&=m_1d\Phi_{L_{\lambda_1}}(x)(v, \xi^{X}(x))+\cdots+
m_rd\Phi_{L_{\lambda_r}}(x)(v, \xi^{X}(x)),\\
&x \in X, v \in T_x(X), \xi \in \mathfrak{u}.
\end{align*}
Since $X$ is connected, and $U$ is semi-simple, we thus get 
\begin{align*}
\Phi_{L_\nu}=m_1\Phi_{L_{\lambda_1}}+\cdots+m_r\Phi_{L_{\lambda_r}}.
\end{align*}
This finishes the proof.
\end{proof}

We recall the following descent criterion by Kempf.

\begin{prop}(\cite[Prop. 4.2.]{KKV})
The line bundle $L_\nu$ on $X$ descends to a line bundle on $Y$
if and only if for every $x \in X^{ss}(L)$ whose $G$-orbit $G.x$ is relatively 
Zariski-closed in $X^{ss}(L)$, the stabilizer $G_x$ acts trivially on the 
fibre $(L_\nu)_x$. 
\end{prop}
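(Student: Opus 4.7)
My plan is to prove both implications by reducing the descent question to a $G_x$-equivariant local problem via Luna's \'etale slice theorem. For the necessity direction, I would argue as follows: if $L_\nu$ descends to a line bundle $M$ on $Y$, then by definition there is a $G$-equivariant isomorphism $\pi^*M \cong L_\nu$ on $X^{ss}(L)$ in which $G$ acts trivially on the pullback $\pi^*M$. For any $x \in X^{ss}(L)$ the fibre $(L_\nu)_x$ is then identified with $M_{\pi(x)}$, and since all of $G$ acts trivially on this fibre, so does $G_x$. Note that no closedness assumption on $G.x$ is needed for this implication.

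For the converse, assume the fibrewise triviality hypothesis on stabilizers of Zariski-closed orbits. The key step is to produce an \'etale-local trivialization of the descent. I would fix $x \in X^{ss}(L)$ with $G.x$ closed and apply Luna's slice theorem to obtain a $G_x$-stable affine subvariety $S \ni x$ such that the natural map $G \times^{G_x} S \to X^{ss}(L)$ is strongly \'etale onto a $G$-saturated open neighbourhood of $G.x$, while the induced morphism $S//G_x \to Y$ is \'etale onto a neighbourhood of $\pi(x)$. Recall that $G_x$ is reductive by Matsushima's theorem, since $G.x$ is closed in the affine $G$-stable open $X^{ss}(L)$. Under this slice presentation, $L_\nu$ restricted to $G\cdot S$ corresponds to a $G_x$-linearized line bundle $L_S$ on $S$, and by hypothesis $G_x$ acts trivially on the fibre $(L_S)_x$.

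Next I would invoke the following descent lemma for affine reductive actions: a $G_x$-linearized line bundle $L_S$ on an affine $G_x$-variety $S$ descends to $S//G_x$ near the image of a fixed point $x$ whenever $G_x$ acts trivially on $(L_S)_x$. This is proved by choosing any local trivialization of $L_S$ near $x$ and averaging it over $G_x$, using reductivity, to produce a $G_x$-invariant trivialization on a $G_x$-stable open neighbourhood of $x$; the triviality of the $G_x$-action on $(L_S)_x$ is exactly what guarantees that the averaged section does not vanish at $x$. This invariant trivialization then defines the sought-after trivialization of the candidate sheaf on the quotient. Gluing these \'etale-local descents and noting that $\mathcal{F}(U) := H^0(\pi^{-1}(U), L_\nu)^G$ is canonical and therefore automatically compatible on overlaps produces an invertible sheaf on $Y$ whose pullback is $L_\nu$. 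The main obstacle I anticipate is precisely this slice-level descent: the reductivity of $G_x$ is essential for the averaging argument, and the fibrewise triviality hypothesis is exactly the obstruction that must vanish for the averaged section to be a nonvanishing local section, so the criterion is sharp. Everything else — the necessity direction and the globalization via the canonical sheaf $\mathcal{F}$ — is essentially formal once the slice theorem is in place.
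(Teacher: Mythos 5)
The paper offers no proof of this statement: it is quoted verbatim as Kempf's descent criterion from \cite[Prop.~4.2]{KKV}, so there is nothing internal to compare your argument against. What you have written is, in outline, the standard published proof (as in Knop--Kraft--Vust, or Drezet's notes on Luna's slice theorem): necessity is formal from the canonical $G$-linearization of $\pi^*\mathcal{F}$, in which $g\in G_x$ acts on $(\pi^*\mathcal{F})_x=\mathcal{F}_{\pi(x)}$ as the identity; sufficiency goes through Luna slices, Matsushima's criterion, and a Reynolds-operator argument showing that a $G_x$-equivariant surjection $H^0(S,L_S)\twoheadrightarrow (L_S)_x$ onto the \emph{trivial} $G_x$-module admits an invariant preimage not vanishing at $x$. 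Two points deserve correction or amplification. First, $X^{ss}(L)$ is \emph{not} affine, so you cannot apply Matsushima or Luna to it directly; you must first pass to a $G$-saturated affine open $X_s=\{s\neq 0\}$, $s\in H^0(X,L^m)^G$, containing $x$ --- saturation guarantees that $G.x$ remains closed in $X_s$, and then both theorems apply. Second, the concluding step needs a sentence: either use that the slice is \emph{strongly} \'etale, so that $S//G_x\to Y$ is \'etale and pulls $\mathcal{F}$ back to the analogous sheaf on $S//G_x$, and then descend invertibility along an \'etale surjection; or, more directly, transport your $G_x$-invariant nonvanishing section of $L_S$ through the isomorphism $H^0(S,L_S)^{G_x}\cong H^0(G\cdot S, L_\nu)^G$ to get a $G$-invariant trivialization of $L_\nu$ over the saturated neighbourhood $G\cdot S$, which exhibits $\mathcal{F}$ as free near $\pi(x)$. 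With these repairs the argument is complete and is essentially the proof the cited reference gives.
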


We now turn to an infinitesimal version of this condition.

\noindent If the orbit $G.x$, for $x \in X^{ss}(L)$, is relatively closed in $X^{ss}(L)$, 
the stabilizer $G_x$ is the complexification of the stabilizer $U_x$, i.e., 
$G_x=(U_x)^\C$ (cf. \cite[Prop. 1.6, Prop. 2.4]{S95}).
 
Let $D_{L_\nu}$ be the Chern connection of $L_\nu$ 
with respect to the unique Hermitian metric $h_{L_\nu}$ defined 
by the 2-form $\omega_\nu$. 
(Note that this is defined by pulling back the natural Hermitian metric 
from an ample line bundle on the flag variety $(G \times G')/P_\nu$.)

For a tangent vector $v \in T_\eta(L_\nu), \,\eta \in L_\nu$, 
let $v_h$ be the horizontal component of $v$ as defined by the 
splitting of the tangent bundle $T(L_\nu)$ induced by the 
connection $D_{L_\nu}$.  Since the group $U$ acts on the line bundle 
$L_\nu$ as automorphisms preserving the Hermitian metric $h_{L_\nu}$, 
for any $\xi \in \mathfrak{u}$, the vector field $\xi^{L_\nu}$ on $L_\nu$ defined 
by $\xi$ is given by
\begin{align*}
\xi^{L_\nu}(\eta)=(\xi^{L_\nu}(\eta))_h+\Phi_{L_\nu}(x)(\xi)\zeta(\eta),
\quad \eta \in (L_\nu)_x,
\end{align*}
where $\zeta$ is the vector field on $L_\nu$ generating the 
$S^1$-action on $L_\nu$ defined by fibrewise multiplication 
(cf. \cite[Theorem 3.3.1.]{Kos}).

In particular, the infinitesimal action of the Lie algebra
$\mathfrak{u}_x$ of the stabilizer $U_x$ of $x$ on the fibre $(L_\nu)_x$ is given by 
\begin{align*}
\xi^{L_\nu}(\eta)=\Phi_{L_\nu}(x)(\xi)\zeta(\eta), 
\quad \eta \in (L_\nu)_x.
\end{align*}
A necessary condition for the descent of the line bundle 
$L_\nu$ to a line bundle on $Y$ is thus that the 
condition 
\begin{align}
\Phi_{L_\nu}(x)(\xi)=0
\label{E: infdescent}
\end{align}
hold for every $x \in X^{ss}(L)$ for which the orbit $G.x$ is relatively 
closed in $X^{ss}(L)$, and $\xi \in \mathfrak{u}_x$.
On the other hand, if the condition \eqref{E: infdescent} holds, 
the identity component $(G_x)^0$ acts trivially 
on $(L_\nu)_x$ for every $x \in X^{ss}(L)$ with relatively closed $G$-orbit. 
Therefore the action of $G_x$ on $(L_\nu)_x$ factorizes through an 
action of the finite group $G_x/(G_x)^0$ of connected components of $G_x$.
If $q=q(x)$ is the order of this group, the action of $G_x$ on the fibre 
$(L_\nu^q)_x$ at $x$ of the $q$-th tensor power of $L_\nu$ is 
thus trivial. Since there are only finitely many conjugacy classes 
of stabilizers of points $x \in X^{ss}(L)$ with relatively closed 
$G$-orbit $G.x \subseteq X^{ss}(L)$, the condition \eqref{E: infdescent} implies the 
existence of a uniform $q \in \N$ such that $G_x$ acts 
trivially on $(L_\nu^q)_x$, for all $x \in X^{ss}(L)$ with relatively 
closed $G$-orbit. Hence, the line bundle $L_\nu^q$ descends to 
$Y$. We have thus proved the following proposition.

\begin{prop} \label{P: infdescent}
For a line bundle $L_\nu$ on $X$, the following are equivalent:\\

\noindent (i) There exists a natural number $q$, such that $L_\nu^q$ descends to 
a line bundle on $Y$,\\

\noindent (ii) For every point $x \in X^{ss}(L)$ for which the $G$-orbit $G.x$ 
is relatively Zariski-closed in $X^{ss}(L)$, and $\xi \in \mathfrak{u}_x$, the 
condition
\begin{align*}
\Phi_{L_\nu}(x)(\xi)=0
\end{align*}
holds.
\end{prop}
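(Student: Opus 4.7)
The plan is to derive (i) $\Leftrightarrow$ (ii) by combining Kempf's descent criterion (Prop.~4.2 of \cite{KKV}) with Kostant's formula for the infinitesimal action on the total space $L_\nu$, treating the identity component $(G_x)^0$ and the component group $\pi_0(G_x) := G_x/(G_x)^0$ separately. Fix $x \in X^{ss}(L)$ with $G.x$ relatively closed in $X^{ss}(L)$; by the facts recalled above, $G_x = (U_x)^\C$ is reductive, so its action on the line $(L_\nu)_x$ splits into the part generated by $\mathfrak{u}_x$ and the finite quotient $\pi_0(G_x)$.

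For the infinitesimal part I would apply Kostant's formula
\[
\xi^{L_\nu}(\eta) = (\xi^{L_\nu}(\eta))_h + \Phi_{L_\nu}(x)(\xi)\,\zeta(\eta), \quad \eta \in (L_\nu)_x,\ \xi \in \mathfrak{u}.
\]
When $\xi \in \mathfrak{u}_x$ the fundamental vector field $\xi^X$ vanishes at $x$, hence its horizontal lift $(\xi^{L_\nu}(\eta))_h$ vanishes too, and the infinitesimal action on $(L_\nu)_x$ reduces to multiplication by the scalar $\Phi_{L_\nu}(x)(\xi)$ through the vertical generator $\zeta$. Exponentiating and complexifying, $(G_x)^0$ acts trivially on $(L_\nu)_x$ if and only if condition (ii) holds at $x$. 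For (i) $\Rightarrow$ (ii) note that the infinitesimal action on the $q$-th tensor power is multiplication by $q\,\Phi_{L_\nu}(x)(\xi)$, which must vanish if $(G_x)^0$ acts trivially on $(L_\nu^q)_x$; hence $\Phi_{L_\nu}(x)(\xi) = 0$ for every such $x$ and every $\xi \in \mathfrak{u}_x$.

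Conversely, assuming (ii), the action of $G_x$ on $(L_\nu)_x$ factors through a character $\chi_x \colon \pi_0(G_x) \to \C^*$, which is killed by raising to the power $q(x) := |\pi_0(G_x)|$. The main obstacle is extracting a single exponent $q$ that works simultaneously for all closed-orbit points. To resolve this I would invoke the Luna stratification of the GIT quotient $Y = X^{ss}(L)//G$: the loci parametrizing closed $G$-orbits with a fixed stabilizer type (up to conjugation) form finitely many locally closed strata, since $Y$ is a scheme of finite type. Consequently only finitely many isomorphism types of $\pi_0(G_x)$ occur, and the least common multiple $q$ of their orders gives a uniform exponent; applying Kempf's criterion to $L_\nu^q$ then yields descent, completing the equivalence.
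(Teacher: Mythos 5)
Your proposal is correct and follows essentially the same route as the paper: Kempf's descent criterion combined with Kostant's formula to reduce the action of $(G_x)^0$ on the fibre to the scalar $\Phi_{L_\nu}(x)(\xi)$, and then the finiteness of stabilizer conjugacy classes (which you justify via the Luna stratification, as the paper implicitly does) to extract a uniform exponent $q$ killing the finite component groups.
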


We are now ready to state the main theorem of this section.

\begin{thm} \label{T: descent}
\noindent (i) Assume that the subgroup $G$ is semi-simple. 
Let $F$ be a cell in $C^G(X)$, and let $E_1,\ldots, E_m \in C^G(X)$ be divisors 
such that for some nonempty open subset $V \subseteq (\R^+)^m$, the 
divisors $t_1E_1+\cdots+t_mE_m, \, (t_1,\ldots, t_m) \in V$, lie in the interior of $F$.
Then, for every divisor $E$ in the subgroup of $\mbox{Pic}(X)$ generated by the 
divisors $E_1,\ldots, E_m$, there exists an $m \in \N$ such that 
the line bundle $\mathcal{O}_X(mE)$ descends to a line bundle 
on $Y=X^{ss}(F)//G$. In particular, this holds if the closure $\overline{F}$ is 
a rational polyhedral cone generated by the divisors $E_1,\ldots, E_m \in C^G(X)$\\

\noindent (ii) If $F=C$ is a chamber, and $G \subseteq G'$ is a reductive 
subgroup, then each integral divisor $E$ on $X$ admits a multiple $mE$, for some  
$m\in \N$, such that $\mathcal{O}_X(mE)$ descends to a line bundle on $Y$
\end{thm}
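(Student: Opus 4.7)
The plan is to combine the infinitesimal descent criterion of Proposition~\ref{P: infdescent} with the additivity of moment maps in the line bundle from Lemma~\ref{L: linmomentmap}, turning descent into a purely linear condition on $C^G(X)$.

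For (i), fix a rational point $\vec{t}=(t_1,\ldots,t_m)\in V$ and set $L_{\vec{t}}:=\mathcal{O}_X(t_1E_1+\cdots+t_mE_m)$. Since $L_{\vec{t}}$ lies in the interior of the cell $F$, it has the same semi-stable locus as any ample integral class defining $Y$, namely $X^{ss}(F)$, so after clearing denominators some positive integer power of $L_{\vec{t}}$ descends to an ample line bundle on $Y$ (cf.~\cite{S95}). Proposition~\ref{P: infdescent} then forces
\begin{align*}
\sum_{i=1}^m t_i\,\Phi_{L_{E_i}}(x)(\xi)=\Phi_{L_{\vec{t}}}(x)(\xi)=0
\end{align*}
for every $x\in X^{ss}(F)$ with relatively closed $G$-orbit and every $\xi\in\mathfrak{u}_x$, the first equality being Lemma~\ref{L: linmomentmap}. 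Since rational points are dense in the open set $V\subseteq(\R^+)^m$, varying $\vec{t}$ forces each coefficient $\Phi_{L_{E_i}}(x)(\xi)$ to vanish individually.

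Given any $E=\sum n_iE_i$ in the subgroup of $\mbox{Pic}(X)$ generated by $E_1,\ldots,E_m$, decompose $E=E^+-E^-$ as a difference of two non-negative integer combinations of the $E_i$. By the additivity of moment maps, both $\Phi_{L_{E^\pm}}$ vanish on every relevant pair $(x,\xi)$, so by Proposition~\ref{P: infdescent} there exists a common $q\in\N$ such that both $L_{E^+}^q$ and $L_{E^-}^q$ descend to $Y$. Since descent is preserved under tensor products and duals (the trivial action of $G_x$ on each factor fibre extends to the tensor product), $\mathcal{O}_X(qE)\cong L_{E^+}^q\otimes (L_{E^-}^q)^{-1}$ descends as well, proving (i); the rational polyhedral case is an immediate special instance.

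For (ii), the chamber hypothesis $X^{ss}(L)=X^s(L)$ forces every semi-stable point with relatively closed $G$-orbit to have finite stabilizer, so $\mathfrak{u}_x=0$ and the moment-map condition of Proposition~\ref{P: infdescent} is vacuous for every line bundle $L_\nu$; hence every such $L_\nu$ admits a descending power. The main technical obstacle in (i) is the opening move: ensuring that some ample integral representative inside the interior of $F$ actually has a descending power to $Y$, which couples the cell structure of $C^G(X)$ (so that the semi-stable locus is constant on the interior of $F$) with \cite{S95}. Once this is in hand, everything is driven by the linearity of the moment map and the openness of $V$.
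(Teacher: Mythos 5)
Your proposal is correct and follows essentially the same route as the paper: use the additivity of moment maps (Lemma~\ref{L: linmomentmap}) together with the infinitesimal descent criterion (Proposition~\ref{P: infdescent}), exploit that all rational points of the open set $V$ define the same semi-stable locus so that the linear form $\vec{t}\mapsto\sum_i t_i\Phi_{L_{E_i}}(x)(\xi)$ vanishes identically and hence each coefficient vanishes, and in the chamber case observe that stabilizers are finite so a uniform power descends. Your explicit remark that descent is closed under tensor products and duals (to pass from the generators $E_i$ to the whole subgroup they generate) is a small clarification that the paper leaves implicit.
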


\begin{proof}
For the first part it suffices to prove that the claim holds for all the generators 
$E_i, \, i=1,\ldots, m$. 

Let $D$ be a divisor  in the interior of $F$ which can be written as a 
linear combination $D=\sum_{j=1}^m t_j E_j$, for some $(t_1,\ldots, t_m) \in V$. 
Then, for any $i \in \{1,\ldots, m\}$, 
the divisor $D$ can be expressed as a  
linear combination 
\begin{align*}
D=\sum_{j=1}^m t_j E_j, \quad t_1,\ldots, t_m \geq 0,
\end{align*}
where $t_i>0$. By renumbering the divisors $E_i$, if necessary, me 
may assume that $i=1$, and that the linear combination is 
of the form 
\begin{align*}
D=\sum_{j=1}^\ell t_jE_j, \quad t_1,\ldots, t_\ell >0,
\end{align*} 
where $2 \leq \ell \leq r$.
Let $\Phi_j: X \rightarrow \mathfrak{u}^*$  be the moment map for the action of 
$U$ on $X$ with respect to the line bundle $\mathcal{O}_X(E_j), \, j=1,\ldots, m$, 
and let $\Phi_D: X \rightarrow \mathfrak{u}^*$ be the 
moment map for the $U$-action with respect to $L=\mathcal{O}_X(D)$. 
Then, by Lemma \ref{L: linmomentmap},
\begin{align*}
\Phi_D=\sum_{j=1}^\ell t_j \Phi_j.
\end{align*}
Now, let $x \in X^{ss}(L)$ be a point for which the orbit $G.x \subseteq X^{ss}(L)$ 
is closed in the relative Zariski topology, and let $\xi \in \mathfrak{u}_x$.  
Since the line bundle $\mathcal{O}_X(mD)$ descends, we have
\begin{align*}
\Phi_D(x)(\xi)=0.
\end{align*}
We now prove that $\Phi_1(x)(\xi)=0$ also holds. Indeed, for any tuple 
$(\tau_1, \ldots, \tau_\ell) \in \R^\ell$ of 
positive rational numbers for which $D_\tau:=\tau_1E_1+\cdots+\tau_\ell E_\ell \in F$, 
the $\Q$-divisor $D_\tau$ admits some integral multiple $pD_\tau$ such 
that $X^{ss}(\mathcal{O}_X(pD_\tau))=X^{ss}(L)$. Therefore  
we have $\tau_1\Phi_1(x)(\xi)+\cdots+\tau_\ell\Phi_\ell(x)(\xi)=0$, by Proposition 
\ref{P: infdescent}, since some 
positive power of the line bundle $\mathcal{O}_X(pD_\tau)$ descends 
to a line bundle on $Y$. Hence, the set 
\begin{align*}
\{(\tau_1,\ldots, \tau_\ell) \in \R^\ell \mid \tau_1 \Phi_1(x)(\xi)+\cdots +
\tau_\ell \Phi_\ell(x)(\xi)=0 \}
\end{align*}
contains the open neighbourhood $V$ of the point $(t_1,\ldots, t_\ell)$.
It follows that the map 
\begin{align*}
f: \R^\ell \rightarrow \R, \quad f(\tau_1,\ldots, \tau_\ell):=
\tau_1\Phi_1(x)(\xi)+\cdots +\Phi_\ell(x)(\xi)
\end{align*}
is the zero map. In particular, $f(1,0,\ldots, 0)=\Phi_1(x)(\xi)=0$.
The Lie algebra $\mathfrak{u}_x$ of the stabilizer $U_x$ thus acts 
trivially on the fibre of $\mathcal{O}_X(E_1)$ at $x$. By Proposition 
\ref{P: infdescent} there thus 
exists a $q \in \N$ such that the line bundle  
$\mathcal{O}_X(qE_1)$ descends to a line bundle on $Y$. 
This proves the first claim.

If $G$ is semisimple, then, for divisors $E \in C^G(X)$ (ii) follows from (i) using 
$$(E_1,\ldots, E_m)=(D_1,\ldots, D_r).$$  Hence, every integral divisor in $V^G(X)$ admits 
an integral multiple which descends.

On the other hand, for a general reductive $G \subseteq G'$, every stabilizer $G_x$ of a point 
$x \in X^{ss}(C)$ is finite since each $C$-semi-stable point is $C$-stable. 
Hence, if $E$ is an arbitrary integral divisor on $X$, and $m$ is the smallest common multiple 
of the orders of the finite groups $G_x$ above, the line bundle $\mathcal{O}_X(mE)$ descends to $Y$, 
for each. This proves the second claim. 
\end{proof}

From now on we will focus on GIT quotients $Y=X^{ss}(D)//G=X^{ss}(C)//G$, where 
$D \in C^G(X)$ is an ample divisor in a chamber $C$. 

Since the cone $C^G(X)$, where 
divisors are naturally defined with integral weights, is of full dimension in 
the vector space $V^G(X)$, the subset 
$\{D_1,\ldots, D_r\}$ contains an $\R$-basis for $V^G(X)$. Without loss of generality, 
assume that $\{D_1,\ldots, D_s\}$ is such a basis.
For each $D_i, \, i=1,\ldots, s$, let $m_i$ be the smallest natural number such 
that $\mathcal{O}_X(m_iD_i)$ descends to a line bundle $\mathscr{L}_i$ on $Y$. 
Put $F_i:=m_iD_i, \, i=1,\ldots, r$, and define the $\R$-linear map
\begin{align}
\sigma: V^G(X) & \rightarrow \mbox{Pic}(Y) \otimes_\Z \R,
 \label{E: linmapofcones}\\
\sigma(x_1F_1+\cdots+x_s F_s)&:=x_1 \mathscr{L}_1+\cdots + x_s\mathscr{L}_s, 
\quad x_1,\ldots, x_s \in \R. \nonumber
\end{align}



\section{A Mori dream space quotient}

In this section we refine the choice of the quotient $Y$ from the previous section.

\begin{prop}
There exists an ample divisor $D \in C^G(X)$ such that the set of unstable points $X \setminus X^{ss}(D)$ is 
of codimension at least two. Moreover, $D$ can be chosen to lie in a chamber.
\end{prop}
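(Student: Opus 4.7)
The plan is to choose $D$ as a positive integral combination of the divisor classes $D_i$ coming from the generators $s_i$ of $\mathrm{Cox}(X)^G$, lying both in the interior of $C^G(X)$ and in a chamber, and then to cut the unstable locus down to codimension two using several $G$-invariant sections. The existence of chambers meeting the interior of $C^G(X)$ follows from the finiteness of the GIT decomposition and \cite[Cor.\ 4.1.9]{DH}, together with Proposition 3.1(ii).

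First, I would pick an integral ample $D = m\sum_{i=1}^r a_i D_i$ with $a_i>0$, in the interior of $C^G(X)$ and, after a small perturbation if needed, inside a chamber $C$. Then $\prod_i s_i^{m a_i}$ is a $G$-invariant section of $\mathcal{O}_X(D)$ whose zero locus is $\bigcup_i D_i$, so $X\setminus X^{ss}(D) \subseteq \bigcup_i D_i$. This gives the codimension-one containment; the remaining task is to exclude any irreducible component of any $D_i$ from the unstable locus.

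To do so, I would pass to a large multiple $kD$. Since $D$ is in the interior of $C^G(X)$, the dimension $\dim H^0(X, L_{kD})^G$ grows polynomially in $k$ with positive leading coefficient, as this dimension records a branching multiplicity that is positive on the interior of the branching cone. Expanding invariant sections in the generators $s_i$, their monomial components correspond to decompositions $k\,\mathrm{deg}(D) = \sum b_i \nu_i$; as long as the $\nu_i$ are linearly dependent (which holds under the semisimplicity hypothesis of the paper), there are multiple such decompositions and hence multiple linearly independent monomial invariant sections whose common zero locus is strictly smaller than $\bigcup_i D_i$. A Bertini-type argument then yields two generic invariant sections whose zero divisors share no irreducible codimension-one component, and the intersection of their zero divisors has codimension $\geq 2$, containing the unstable locus.

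The main obstacle is to rule out the existence of a persistent divisorial component of the base locus of $|L_{kD}|^G$. Concretely, for each irreducible divisor $E$ inside some $D_i$, one needs a $G$-invariant section of a suitable power of $\mathcal{O}_X(D)$ not vanishing generically on $E$. Finite generation of $\mathrm{Cox}(X)^G$ reduces this to a finite check, and I expect to complete it by a wall-crossing argument: if such an $E$ persists in the unstable locus for the initial $D$, move across an appropriate GIT wall to an adjacent chamber in which the numerical criterion for $E$ becomes satisfiable. Finiteness of chambers and of potentially obstructing divisor classes then terminates the procedure, yielding the desired $D$ in a chamber with unstable locus of codimension $\geq 2$.
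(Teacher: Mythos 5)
Your opening step is fine and agrees with the paper: writing $D=m\sum_i a_iD_i$ with all $a_i>0$, the invariant section $\prod_i s_i^{ma_i}$ shows $X\setminus X^{ss}(D)\subseteq\bigcup_i D_i$, and chambers meeting the interior of $C^G(X)$ exist. But the heart of the proposition is exactly the step you defer --- ruling out a divisorial component of the base locus of the invariant series $\bigoplus_k H^0(X,L_{kD})^G$ --- and none of the three mechanisms you propose for it actually works. (i) The ``multiple decompositions'' argument fails because all monomials $\prod s_i^{b_i}$ of weight $k\nu_D$ can share a divisorial zero: if every nonnegative solution of $\sum b_i\nu_i=k\nu_D$ has $b_{i_0}>0$ (which happens, e.g., when no integral relation among the $\nu_i$ involves $\nu_{i_0}$), then every invariant section of $L_{kD}$ is divisible by $s_{i_0}$ and $D_{i_0}$ sits inside the unstable locus for all $k$. (ii) A Bertini-type argument cannot help here: two generic members of the \emph{linear subseries} of invariant sections still share precisely the fixed divisorial part of that subseries, which is the very thing you need to show is empty; the argument is circular. (Polynomial growth of $\dim H^0(X,L_{kD})^G$ is also no obstruction to a fixed component --- compare $|kA+F|$ for $A$ ample and $F$ a fixed divisor.) (iii) Wall-crossing gives no mechanism for a fixed divisorial component to disappear, and you give no argument that it does.

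The paper's proof supplies the missing idea: it \emph{changes the divisor class} by subtracting the fixed part. One shows (using the description of the unstable locus as the common zero set of degree-one invariants, after replacing $D'$ by a multiple so that the invariant section ring is generated in degree one) that each codimension-one component $E_i$ of $X\setminus X^{ss}(D')$ is $G$-invariant and hence --- here semisimplicity of $G$ is used to kill the character by which $G$ could act on the defining section --- is the zero set of a $G$-invariant section $\xi_i$. Dividing by $\xi=\prod\xi_i^{m_i}$ identifies the invariant section rings of $D'$ and of $D'':=D'-\sum m_iE_i$, so $D''$ has unstable locus of codimension at least two. This can push $D''$ to the boundary of $C^G(X)$, so a further argument is needed: apply the same construction to a divisor on an extremal face disjoint from the face containing $D''$ and add the results to land in the interior (hence get ampleness), check the sum still has small unstable locus, and finally perturb into a chamber using finite generation of the two-parameter invariant ring $R(kD''-E,E)^G$. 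Your proposal is missing all of this; as written it establishes only the containment $X\setminus X^{ss}(D)\subseteq\bigcup_iD_i$, not codimension two.
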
 

\begin{proof}
Let $D' \in C^G(X)$ be an arbitrary divisor, and let  $Y=X^{ss}(D')//G$ be the 
associated quotient.  By replacing  $L'=\mathcal{O}_X(D')$ with some power, if necessary, we may 
assume that $L'$ descends to a line bundle $L'_0$ on $Y$. 

Since $L_0'$ is semi-ample, the multiplication maps
\begin{align*}
H^0(Y, (L'_0)^k) \otimes H^0(Y, (L'_0)^\ell) \rightarrow H^0(Y, (L_0')^{k+\ell})
\end{align*}
are surjective for $k$ and $\ell$ sufficiently big (\cite[Example 2.1.29]{Laz}). In particular, for 
$k$ big enough, the section ring $\bigoplus_{m=1}^\infty H^0(Y, (L'_0)^{km})$ is generated in 
degree one. Hence, the ring of invariants $\bigoplus_{m=1}^\infty H^0(X, (L')^{km})^G$ is generated in 
degree one  (cf. Remark \ref{R: riemanninv}). 
By again replacing $L'$ by a sufficiently high power, we may assume that $k=1$, so that 
the linear series $\bigoplus_{m=1}^\infty H^0(X, (L')^{m})^G$ is generated in degree one. 
The set of unstable points $X \setminus X^{ss}(D')$ thus equals the common zero 
set of all sections in $H^0(X, L')^G$.
Let $E_1,\ldots, E_\ell$ be the one-dimensional irreducible components of $X \setminus X^{ss}(D')$, and, 
for each $i$, let $m_i \in \N$ be the smallest order to which some section in $H^0(X, L'_0)^G$ 
vanishes along $E_i$. Since $X \setminus X^{ss}(D)$ is $G$-invariant,  each $E_i$ is also $G$-invariant. 
Hence, each $E_i$ is the zero set of a section $\xi_i \in H^0(X, \mathcal{O}_X(E_i))^G$. 
Now, for each $g \in G$, the section $g.\xi_i \in H^0(X, \mathcal{O}_X(E_i))$ vanishes on the zero 
set, $E_i$, of $\xi_i$, so that, by the normality of $X$, $g.\xi_i/\xi_i$ defines a global regular 
function on $X$, i.e., $g.\xi_i=c(g)\xi_i$, for some $c(g) \in \C$. The function $g \mapsto c(g)$ 
then defines a character of $G$, so it has to be constant by the semisimplicity of $G$; that is, 
the section $\xi_i$ is $G$-invariant.

Since each section $s \in H^0(X, L')^G$ is divisible by the section 
$$\xi:=\xi_1^{m_1} \cdots \xi_\ell^{m_\ell} \in H^0(X, \mathcal{O}_X(E))^G,$$ where 
$E:=m_1E_1+\cdots +m_\ell E_\ell$, 
the map
\begin{align*}
\varphi:& \bigoplus_{k=1}^\infty H^0(X, \mathcal{O}_X(k(D'-E)))^G \rightarrow 
\bigoplus_{k=1}^\infty H^0(X,  \mathcal{O}_X(kD'))^G\\
\varphi(s)&:=s \cdot \xi^k, \quad s \in H^0(X, \mathcal{O}_X(k(D'-E)))^G
\end{align*}
defines an isomorphism of graded rings. Put $D'':=D'-E$ and $L'':=\mathcal{O}_X(D'')$. 
By construction, the unstable locus of $\mathcal{O}_X(D'')$ is a subset of the 
union of the irreducible components of $X \setminus X^{ss}(D')$  which are of 
codimension at least two.
We now claim that the  divisor $D''$ can be chosen to lie in the interior of $C^G(X)$. 
Indeed, if we start with a divisor $D_1'$, and the resulting divisor from the above 
construction, $D''_1$, would happen to lie in some face $F_1$ (of maximal dimension) of the boundary 
of $C^G(X)$, we proceed as 
follows. Let $F_2$ be an extremal ray of $C^G(X)$, i.e., a face of minimal dimension, 
such that $F_2 \cap F_1=\emptyset$ (we may assume here that $\mbox{dim}\, C^G(X)>1$, since $C^G(X)$ 
will otherwise lie in the ample cone of $X$),  and let $D_2' \in F_2$ be 
an integral divisor in this face. By applying the above construction to $D_2'$, we get 
a decomposition of $D_2'$ as the sum $D_2'=D_2''+E_2$, where $D_2''$ has an unstable locus 
of codimension at least two. Since $D_2'$ lies in the face $F_2$, we must also have 
$D_2'' \in F_2$ and $E_2 \in F_2$.   Then, since $F_1$ and $F_2$ are disjoint faces of 
$C^G(X)$,  the divisor $D'':=D_1''+D_2''$ lies in the interior of $C^G(X)$. In particular, $D''$ 
is ample. 

In order to see that $D''$ also has an unstable locus of codimension at least two, 
assume that $X \setminus X^{ss}(D'')$ contains some divisor $Z$.  Then, by 
replacing $D_1''$ by some multiple $mD_1'', m \in \N$, we can, using 
the fact that the unstable locus of $D_1''$ contains no divisor,   
assume that the exists a section $s_1 \in H^0(X, \mathcal{O}_X(D_1''))^G$ which does not vanish on 
$Z$. However, for every $m\in \N$, and every section $t \in H^0(X, \mathcal{O}_X(mD_2''))^G$, 
the product section $s_1^m \cdot t \in H^0(X, \mathcal{O}_X(mD''))^G$  
vanishes on $Z$. It follows that the divisor $Z$ lies in the stable base locus 
of $D_2''$; a contradiction.  This shows that $X \setminus X^{ss}(D'')$ is of codimension at least 
two.
 
If $D''$ happens to lie in a chamber of $C^G(X)$, we are done. Otherwise, $D''$ lies in a 
cell $F$. Now choose an effective divisor $E$ and a natural number $k$ so that the divisor $kD''-E$ 
lies in some chamber $C$. We now claim that the set of unstable points of $C$, i.e., of 
$kD''-E$, is of codimension at least two. Indeed, the 
section ring  $$R(kD''-E, E)=\bigoplus_{m_1,m_2 \in \Z} H^0(X, \mathcal{O}_X(m_1(kD''-E)+m_2E))$$ is 
finitely generated since $kD''-E$ and $E$ are semi-ample divisors (\cite[Lemma 2.8]{HK}). 
Hence, the invariant ring 
$$R(kD''-E, E)^G=\bigoplus_{m_1, m_2 \in \Z}H^0(X, \mathcal{O}_X(m_1(kD''-E)+m_2E))^G$$ 
is also finitely generated. Thus, there exists an $\ell \in \N$ such that, for $m \geq \ell$, 
any invariant section $s \in H^0(X, \mathcal{O}_X(mkD'')^G$ can be 
written as 
\begin{align*}
s=\sum_{j=1}^p s_j t_j, 
\end{align*} 
for some $s_j \in H^0(X, \mathcal{O}_X(m(kD''-E)))^G, \, t_j \in H^0(X, \mathcal{O}_X(mE))^G, j=1,\ldots, p$.
Now, if all invariant sections $t \in H^0(X, \mathcal{O}_X(m(kD''-E)))^G, \, m \in \N$, would vanish 
on some divisor $Z$ of $X$, then every $s \in H^0(X, \mathcal{O}_X(mkD''))^G$, 
for $m$ sufficiently big, would also vanish  on the divisor $Z$ - in contradiction to the fact 
that the unstable locus of $D''$ contains no divisors. Hence, the set of unstable points of 
$D:=kD''-E$ is of codimension at least two.
\end{proof}

Now, let $D \in C^GX)$ be an ample divisor which satisfies the conditions in the above proposition, 
and put $L:=\mathcal{O}_X(D)$.

\begin{thm} \label{T: Pic(Y)}

(i) For any integral divisor $F \in C^G(X)$ such that 
$\mathcal{O}_X(F)$ descends to a line bundle $\mathcal{O}_Y(F_Y)$ 
on $Y$, 
\begin{align*}
H^0(Y, \mathcal{O}_Y(F_Y)) \stackrel{\pi^*}{\cong} 
H^0(X^{ss}(L), \mathcal{O}_X(F)\mid_{X^{ss}(L)})^G
\cong H^0(X, \mathcal{O}_X(F))^G.
\end{align*}
\\ 

\noindent (ii) The Picard group $\mbox{Pic}(Y)$ is finitely generated, and 
the restriction of the map \eqref{E: linmapofcones} defines an isomorphism 
of cones
\begin{align*}
\sigma\mid_{C^G(X)}: C^G(X) \stackrel{\cong}{\rightarrow} \overline{\mbox{Eff}}(Y) \subseteq 
\mbox{Pic}(Y) \otimes_\Z \R.
\end{align*}

\end{thm}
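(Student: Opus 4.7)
The plan is to deduce (i) from descent together with a Hartogs-type extension, and then to leverage it to prove (ii) by transferring between effective divisors on $Y$ and $G$-invariant effective divisors on $X$.

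For part (i), the first isomorphism is built into the definition of descent: the sheaf associated to the descended line bundle $\mathcal{O}_Y(F_Y)$ is the invariant direct image $U \mapsto H^0(\pi^{-1}(U), \mathcal{O}_X(F))^G$, and taking $U = Y$ yields the claim. For the second isomorphism, I would use that $X = G/B \times G'/B'$ is smooth and that, by the choice of $D$ in the preceding proposition, the unstable locus $X \setminus X^{ss}(L)$ has codimension at least two. Since $\mathcal{O}_X(F)$ is locally free, algebraic Hartogs gives $H^0(X, \mathcal{O}_X(F)) \cong H^0(X^{ss}(L), \mathcal{O}_X(F)|_{X^{ss}(L)})$, and restricting to $G$-invariants preserves this isomorphism.

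For part (ii), the core step is the cone equality $\sigma(C^G(X)) = \overline{\mbox{Eff}}(Y)$. The inclusion $\subseteq$ is direct: each generator $D_i$ of $C^G(X)$ is the zero divisor of an invariant section $s_i$ of $L_{\nu_i}$, so $s_i^{m_i}$ descends by (i) to a nonzero section of $\mathscr{L}_i$, making each $\mathscr{L}_i$ effective, so any nonnegative combination is pseudo-effective. For the reverse inclusion, I would take an integral effective divisor $E'$ on $Y$ with a nonzero section $t$; by the codimension-two argument applied to Picard groups, $\pi^* \mathcal{O}_Y(E')$ on $X^{ss}(L)$ extends uniquely to a $G$-equivariant line bundle $\mathcal{O}_X(F)$ on $X$, and $\pi^* t$ extends by (i) to a $G$-invariant section whose zero divisor, by the finite generation of $\mbox{Cox}(X)^G$ by the $s_i$, is a nonnegative $\Z$-combination of the $D_i$; hence $F \in C^G(X)$ and $\sigma(F)$ recovers the class of $E'$. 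Passing to $\Q$-divisors and taking closures yields $\overline{\mbox{Eff}}(Y) \subseteq \sigma(C^G(X))$.

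Injectivity of $\sigma$ on $V^G(X)$ follows because $\pi^* \circ \sigma$ factors through the restriction $\mbox{Pic}(X) \otimes \R \to \mbox{Pic}(X^{ss}(L)) \otimes \R$, which is an isomorphism by the same codimension-two reasoning; together with the cone equality this gives the desired cone isomorphism. Finally, $\mbox{Pic}(Y)$ is finitely generated because, on the projective variety $Y$, every line bundle is a difference of two very ample bundles, so $\mbox{Pic}(Y) \otimes \R$ is spanned by $\overline{\mbox{Eff}}(Y)$ and hence embeds into the finite-dimensional space $\sigma(V^G(X))$; combined with the fact that $X^{ss}(L) = X^s(L)$ has only finite stabilizers (so $\pi^*$ has torsion kernel) and the finite generation of $\mbox{Pic}^G(X)$ for the flag variety $X$, the conclusion follows. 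The main obstacle I foresee is keeping the integer scaling factors $m_i$ straight: one must check that the extensions of line bundles and sections across the codimension-two unstable locus commute with the scaling built into $\sigma$, so that the face structure of $C^G(X)$ matches that of $\overline{\mbox{Eff}}(Y)$ without distortion.
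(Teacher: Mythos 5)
Your proposal follows essentially the same route as the paper: part (i) via the definition of descent for the first isomorphism and normality of $X$ plus the codimension-$\geq 2$ unstable locus for the second, and part (ii) via the isomorphism $\mbox{Pic}(X)\cong\mbox{Pic}(X^{ss}(L))$, with $\sigma$ acting as an inverse to $\pi^*$ and part (i) identifying $\sigma(C^G(X))$ with $\overline{\mbox{Eff}}(Y)$. The one place the paper's argument is tighter is the finite generation of $\mbox{Pic}(Y)$: instead of your route through ``$\pi^*$ has torsion kernel'' (which would still require showing that torsion subgroup is finite), it simply observes that $\pi^*$ embeds $\mbox{Pic}(Y)$ injectively into the finitely generated group $\mbox{Pic}(X)$, injectivity being immediate from the descent description $\mathcal{F}\cong(\pi_*\pi^*\mathcal{F})^G$.
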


\begin{proof}
First of all, the natural restriction map $\mbox{Pic}(X) \rightarrow \mbox{Pic}(X^{ss}(L))$ 
defines an isomorphism of groups, since $X \setminus X^{ss}(L)$ does not 
contain any divisors. 

For (i), we first note that the first isomorphism holds by the definition of the 
sheaf $\mathcal{O}_Y(F_Y)$. Moreover, since $X$ is normal, and 
the unstable locus of $L$ is of codimension at least two, any section 
$s \in H^0(X^{ss}(L), \mathcal{O}_X(F) \mid_{X^{ss}(L)})$ extends 
uniquely to a section $S$ of $\mathcal{O}_X(F)$ over $X$. If $s$ is 
$G$-invariant, then so is $S$, since the identity $g.S=S$ holds 
on $X$ if it holds on $X^{ss}(L)$.
This shows that the second isomorphism holds. This proves (i).

The natural restriction map 
\begin{align}
\mbox{Pic}(X) \rightarrow \mbox{Pic}(X^{ss}(L))
\label{E: restrpic}
\end{align}
defines an isomorphism of groups, since $X \setminus X^{ss}(L)$ does not 
contain any divisors. 

If $E$ is a divisor on $Y$, the divisor $\pi^*(E)$ on $X^{ss}(L)$ extends to 
divisor on $X$, which we will also denote by $\pi^*(E)$, by taking its 
closure in $X$. The map $\pi^*$ extends to an injective linear 
map $\pi^*: \mbox{Pic}(Y) \otimes_\Z \R \rightarrow \mbox{Pic}(X) \otimes_\Z \R$
of real vector spaces. Since \eqref{E: restrpic} is an isomorphism of 
groups, the map $$\sigma: V^G(X) 
\rightarrow \mbox{Pic}(Y) \otimes_\Z \R$$ is an inverse to $\pi^*$, so that we 
have an isomorphism
$V^G(X) \stackrel{\cong}{\rightarrow} \mbox{Pic}(Y) \otimes_\Z \R$
of real vector spaces. The map $\pi^*$ maps $\mbox{Pic}(Y)$ injectively 
into the finitely generated abelian group $\mbox{Pic}(X)$, so that $\mbox{Pic}(Y)$ is 
also finitely generated. Hence, $N^1(Y)_\R \cong \mbox{Pic}(Y) \otimes_\Z \R$, so that 
$\sigma$ indeed defines an isomorphism between $V^G(X)$ and $N^1(Y)_\R$.
By (i), we also have $\sigma(C^G(X))=\overline{\mbox{Eff}}(Y)$. 
\end{proof}

\begin{rem} \label{R: riemanninv}
The second isomorphism in part (i) of the above theorem actually also holds
when the unstable locus $X \setminus X^{ss}(L)$ is not of codimension at least two. 
Indeed, we can argue as in the proof of \cite[Theorem 2.18]{S95}.

Recall that the set of semi-stable points $X^{ss}(L)$ is the set of 
points $x \in X$ for which the closure $\overline{G.x}$ of the $G$-orbit of 
$x$ intersects the zero set $\Phi_D^{-1}(0)$ of the moment map $\Phi_D$ 
defined by $D$. Hence, any $G$-invariant section 
$s \in H^0(X^{ss}(L), \mathcal{O}_X(F)\mid_{X^{ss}(L)})$ is 
uniquely determined by its values on the zero set $\Phi_D^{-1}(0) \subseteq X^{ss}(L)$.
For any $U$-invariant Hermitian metric $h$ on $\mathcal{O}_X(F)$, the 
function $x \mapsto h(s(x), s(x))$ is therefore bounded on $X^{ss}(L)$. 
By the Riemann Extension Theorem, $s$ thus admits a unique extension to a section 
$S \in H^0(X, \mathcal{O}_X(F))$. 
For any  $g \in G$, the identity $gS=S$ holds since it holds over the 
open set $X^{ss}(L)$, by the $G$-invariance of $s$. Hence, 
$S \in H^0(X, \mathcal{O}_X(F))^G$. This proves the 
second isomorphism. 
\end{rem}

\begin{rem} \label{R: coneid}
Even though the branching cone is usually defined in the more general setting of a pair $(G, G')$, 
where $G$ and $G'$ are merely reductive, the second part of the above theorem can not 
hold in this generality. Indeed, if both $G'$ and $G$ have nontrivial centres, and 
$\lambda' \in (\mathfrak{t}')^*$ is an integral weight which exponentiates to a nontrivial 
character of $G'$, i.e., of the centre of $G'$, and $\lambda \in (\mathfrak{t})^*$ is the 
restriction of $\lambda$, and thus exponentiates to a character of $G$, then 
both $(\lambda, \lambda')$ and $(-\lambda, -\lambda')$ lie in the branching cone 
of $(G, G')$, i.e., in $C^G(X)$. If the restricted character of $G$ is nontrivial, 
the points $(\lambda, \lambda')$ and $(-\lambda, -\lambda')$ in $C^G(X)$ are distinct; they 
both represent the trivial line bundle over $X$, but equipped with two distinct structures of 
a $G$-line bundle. In particular, the cone $C^G(X)$ then contains the real line 
through the point $(\lambda, \lambda')$. However, the pseudo-effective cone of a 
projective variety can not contain any lines (\cite[Lemma 4.6]{LM09}).
\end{rem}

\begin{thm} \label{T: MDS}
The quotient $Y:=X^{ss}(D)//G$ is a Mori dream space.
\end{thm}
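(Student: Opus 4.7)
The plan is to apply the Hu--Keel characterization: a normal projective $\Q$-factorial variety $Y$ with $\mbox{Pic}(Y)_\Q = N^1(Y)_\Q$ is a Mori dream space if and only if its Cox ring is a finitely generated $\C$-algebra. Most of the geometric input for this criterion has already been set up in the previous two theorems.

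First I would dispose of the ``formal'' hypotheses. The quotient $Y=X^{ss}(D)//G$ is projective and normal since $X$ is. Because $D$ was chosen to lie in a chamber, semi-stability coincides with stability on $X^{ss}(D)$, so every point has finite stabilizer; consequently $Y$ has at worst finite quotient singularities and is $\Q$-factorial. That $\mbox{Pic}(Y)$ is finitely generated and that $N^1(Y)_\R \cong \mbox{Pic}(Y) \otimes_\Z \R$ (via $\sigma$) is exactly the content of Theorem \ref{T: Pic(Y)}(ii).

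The heart of the argument is the finite generation of $\mbox{Cox}(Y)$. Here I would use Theorem \ref{T: Pic(Y)}(i) to identify canonically, for every line bundle $\mathcal{O}_Y(F_Y)$ descended from $L_\nu$, the space $H^0(Y, \mathcal{O}_Y(F_Y))$ with $H^0(X, L_\nu)^G$. By Theorem \ref{T: descent}(ii), for each integral $\nu$ in $V^G(X)$ some positive multiple $q\nu$ lies in the sublattice
\[
\Lambda_Y := \{\nu \in \mbox{Pic}(X) \cap V^G(X) : L_\nu \text{ descends to } Y\},
\]
so $\Lambda_Y$ has finite index in $\mbox{Pic}(X) \cap V^G(X)$. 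Via $\sigma^{-1}$, $\mbox{Cox}(Y)$ is then identified with the sub-multigraded algebra
\[
\bigoplus_{\nu \in \Lambda_Y} H^0(X, L_\nu)^G \subseteq \mbox{Cox}(X)^G,
\]
and the ambient ring is already known to be finitely generated, since $\mbox{Cox}(X)$ is finitely generated (it is generated by the spaces attached to fundamental weights, as recalled in Section 3) and $G$ is reductive. Non-effective Picard classes contribute nothing, so the effective part of $\mbox{Cox}(Y)$ is the whole thing, and this identification exhausts $\mbox{Cox}(Y)$.

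The main obstacle is justifying that finite generation transfers from $\mbox{Cox}(X)^G$ to the sub-multigraded algebra indexed by $\Lambda_Y$. In the singly graded case this is the standard Veronese fact that $R^{(d)}=\bigoplus_n R_{nd}$ is finitely generated when $R$ is; its extension to a finite-index sublattice $\Lambda$ of a $\Z^r$-grading is classical and proceeds by choosing a common exponent so that suitable powers of a finite generating set of $\mbox{Cox}(X)^G$ fall into $\Lambda_Y$, and then invoking module-finiteness of $\mbox{Cox}(X)^G$ over the $\Lambda_Y$-graded subring together with Noetherianity. One should also confirm there is no leakage outside $V^G(X)$, which holds because every $G$-invariant section on $X$ has weight in $C^G(X)\subset V^G(X)$. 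Combining the three ingredients, Hu--Keel yields that $Y$ is a Mori dream space.
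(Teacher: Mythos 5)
Your proposal is correct and rests on the same overall skeleton as the paper's proof: verify the Hu--Keel criterion, with normality, $\Q$-factoriality, and the finite generation of $\mbox{Pic}(Y)$ together with $\mbox{Pic}(Y)\otimes_\Z\R\cong N^1(Y)_\R$ supplied exactly as you describe (the paper gets $\Q$-factoriality from \cite[Lemma 2.1]{HK} applied to the geometric quotient of $X^s(D)$, which is the same observation as your finite-stabilizer argument). Where you genuinely diverge is in the finite generation of $\mbox{Cox}(Y)$. The paper does \emph{not} pass through $\mbox{Cox}(X)^G$: it fixes an integral basis $Z_1,\ldots,Z_s$ of $N^1(Y)_\R$ whose positive span contains $\overline{\mbox{Eff}}(Y)$, pulls back to a lattice $\Gamma\subseteq V^G(X)$, applies Gordan's lemma to the semigroup $\Gamma\cap C^G(X)$ to obtain finitely many effective generators $E_1,\ldots,E_\ell$, invokes finite generation of the multigraded section ring $R(E_1,\ldots,E_\ell)$ (every effective divisor on $X$ being semi-ample), and only then takes $G$-invariants. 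You instead recycle the finite generation of $\mbox{Cox}(X)^G$ already established in Section 3 and descend to the sublattice $\Lambda_Y$ of descending classes by a multigraded Veronese argument. This is sound, and arguably more economical since it avoids re-proving finite generation of a section ring; the cleanest way to nail down your ``common exponent plus Noetherianity'' step is to observe that $\bigoplus_{\nu\in\Lambda_Y}H^0(X,L_\nu)^G$ is the ring of invariants of the diagonalizable (hence linearly reductive) group $\mbox{Hom}(\mbox{Pic}(X)/\Lambda_Y,\C^\times)$ acting on $\mbox{Cox}(X)^G$ through the grading, which disposes of the issue that $\Lambda_Y$ has finite index only in $\mbox{Pic}(X)\cap V^G(X)$ and not in all of $\mbox{Pic}(X)$ --- a non-issue anyway, since $\mbox{Cox}(X)^G$ is supported in $C^G(X)\subseteq V^G(X)$, as you note. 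The one point worth stating explicitly is that the identification of $\mbox{Cox}(Y)$ with the $\Lambda_Y$-graded part requires Theorem \ref{T: Pic(Y)}(i) for effective classes and the equality $\sigma(C^G(X))=\overline{\mbox{Eff}}(Y)$ from Theorem \ref{T: Pic(Y)}(ii) to see that non-effective classes contribute zero on both sides; with that, your argument is complete.
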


\begin{proof}
The variety $Y$, being the GIT quotient of a normal variety, is normal. 
Moreover, since $Y$ is a geometric quotient of the $\Q$-factorial variety $X^{s}(D)$, 
it is $\Q$-factorial (\cite[Lemma 2.1]{HK}). By Theorem \ref{T: Pic(Y)}, the 
group $\mbox{Pic}(Y)$ is finitely generated. In order to prove the claim, 
it thus suffices to prove the Cox ring of $Y$ is finitely generated.

For this, we first choose a rational polyhedral cone 
$\mathcal{C} \subseteq \mbox{Pic}(Y) \otimes \R=N^1(Y)_\R$ such that $\mathcal{C}$ is 
generated by an integral basis $\{Z_1,\ldots, Z_s\}$ for the $\R$-vector space $N^1(Y)_\R$, and 
$\overline{\mbox{Eff}}(Y)$ lies in the interior of $\mathcal{C}$. (This is possible, since 
the pseudo-effective cone of a projective variety does not contain any lines.) 
Then, the divisors $Z_i$ pull back to divisors $\pi^*Z_i$ on $X$ with 
\begin{align*}
H^0(X, \mathcal{O}_X(\pi^*(m_1Z_1+\cdots +m_sZ_s)))^G &\cong 
H^0(Y, \mathcal{O}_Y(m_1Z_1+\cdots +m_sZ_s)), \\ 
& m_1,\ldots, m_s \in \Z.
\end{align*}
Now, $H^0(X, \mathcal{O}_X(\pi^*(m_1Z_1+\cdots +m_s Z_s)))^G \neq \{0\}$ if and only if 
$$\pi^*(m_1Z_1+\cdots+m_sZ_s) \in C^G(X).$$ Let $\Gamma \subseteq V^G(X)$ be the 
integral lattice generated by the divisors $\pi^*Z_i, \,i=1,\ldots, s$, and put 
$S(\Gamma):=\Gamma \cap C^G(X)$. By the fact that each integral divisor in
$V^G(X)$ admits a multiple which descends to $Y$, the cone $C^G(X)$ is the 
closed convex cone generated by the semigroup 
$S(\Gamma)$.  Since $C^G(X)$ is a rational polyhedral cone,  Gordan's 
lemma shows that the semigroup $S(\Gamma)$ 
is finitely generated, say, by divisors $E_1,\ldots, E_\ell$. 
Moreover, since every effective divisor on $X$ is semi-ample, the section ring 
\begin{align*}
R(E_1,\ldots, E_\ell):=\bigoplus_{m_1,\ldots, m_\ell \in\N_0} 
H^0(X, \mathcal{O}_X(m_1E_1+\cdots+m_\ell E_\ell))
\end{align*} 
is finitely generated, so that 
\begin{align*}
\bigoplus_{m_1,\ldots, m_s \in \Z} H^0(X, \mathcal{O}_X(\pi^*(m_1Z_1+\cdots +m_sZ_s)) 
\cong R(E_1,\ldots, E_\ell)
\end{align*}
is a finitely generated ring. By taking invariants, it follows that 
\begin{align*}
\mbox{Cox(Y)}=\bigoplus_{m_1,\ldots, m_s \in \Z} 
H^0(X, \mathcal{O}_X(\pi^*(m_1Z_1+\cdots +m_sZ_s))^G 
\end{align*}
is a finitely generated ring.
\end{proof}


\section{Global branching laws}

In this section we use the identification of the branching cone for the pair $(G, G')$
with the pseudo-effective cone $\overline{\mbox{Eff}}(Y)$ to study the global 
branching laws.

We first briefly recall the construction of Okounkov bodies. For a thorough 
treatment, we refer to the seminal papers \cite{Ok96}, \cite{KK09}, and \cite{LM09}.  
Let $n:=\mbox{dim} Y$. 
An \emph{admissible flag} of subvarieties of $Y$ is a flag
\begin{align*}
\{p\}=Y_n \subseteq Y_{n-1} \subseteq \cdots \subseteq Y_1 \subseteq Y_0:=Y 
\end{align*}
of normal irreducible subvarieties, where $Y_i$ has codimension $i$, $i=0,\ldots, n$, 
and where the point $p$ is a non-singular point of each $Y_i$.
Let 
\begin{align*}
v: \C(Y)^\times \rightarrow \Z^n
\end{align*}
be the valuation on the ring of rational functions defined by the flag $Y_\bullet$, 
where $\Z^n$ is equipped with the lexicographic order.
 
If $E$ is an effective divisor on $Y$, by identifying the section ring 
$$R(E)=\bigoplus_{k=0}^\infty H^0(Y, \mathcal{O}_Y(kE))$$ of $E$ 
with a subring of $\C(Y)$, the valuation $v$ yields a \emph{valuation-like function}
\begin{align*}
\bigsqcup_{k \geq 0} H^0(Y, \mathcal{O}_Y(kE)) \setminus \{0\} \rightarrow \N_0^n,
\end{align*}  
i.e., a function having the ring-theoretic properties of a valuation, although it 
is only defined on nonzero homogeneous elements.

Now, let $\Sigma \subseteq \overline{\mbox{Eff}}(Y)$ be the semigroup generated by 
all effective divisors. Using values of all effective divisors in $\Sigma$, we 
define the semigroup
\begin{align*}
S_{Y_\bullet}(Y):=\{(v(s), E) \in \N_0^n \times \Sigma \mid s \in H^0(Y, \mathcal{O}_Y(E)) 
\setminus \{0\}\}.
\end{align*}
Finally, we define the \emph{global Okounkov body} for $Y$, with respect to 
the flag $Y_\bullet$, to be the closed convex cone 
\begin{align*}
\Delta_{Y_\bullet}(Y) \subseteq \R^n \times \overline{\mbox{Eff}}(Y)
\end{align*}
generated by the semigroup $S_{Y_\bullet}(Y)$.

For a fixed effective divisor $E$, we define the semigroup
\begin{align*}
S_{Y_\bullet}(E):=\{(v(s), k) \in \N_0^n \times \N_0 \mid s \in H^0(Y, \mathcal{O}_Y(kE)) 
\setminus \{0\}\}, 
\end{align*}
and the Okounkov body for $E$, with respect to the flag $Y_\bullet$, as 
the closed convex hull
\begin{align*}
\Delta_{Y_\bullet}:=\overline{\mbox{conv} 
\left\{\frac{v(s)}{k} \mid (v(s), k) \in S_{Y_\bullet}(E)\right\}} \subseteq \R^n.
\end{align*} 

If $p_2: \R^n \times \overline{\mbox{Eff}}(Y) \rightarrow \overline{\mbox{Eff}}(Y)$ denotes 
the projection onto the second factor, the identity
\begin{align}
p_2^{-1}(E) \cap \Delta_{Y_\bullet}(Y)=\Delta_{Y_\bullet}(E)
\label{E: okslice}
\end{align}
then holds for every effective divisor $E \in \Sigma$ (cf. \cite{LM09}). It therefore makes sense
to define Okounkov bodies for $\R$-divisors by
\begin{align}
\Delta_{Y_\bullet}(\xi):=p_2^{-1}(\xi) \cap \Delta_{Y_\bullet}(Y), \quad \xi \in \overline{\mbox{Eff}}(Y).
\label{E: okrdiv}
\end{align}

For each $E \in \Sigma$, let $d(E) \in \{0,\ldots, n\}$ be the dimension of 
$\Delta_{Y_\bullet}(E)$, i.e., the dimension of the smallest subspace of 
$\R^n$ containing the convex compact set $\Delta_{Y_\bullet}(E)$, and 
let $\mbox{Vol}_{d(E)}(\Delta_{Y_\bullet}(E))$ denote the volume of $\Delta_{Y_\bullet}(E)$ 
with respect to the Lebesgue measure on $\R^{d(E)}$. In particular, if 
$E$ is big, that is, in the interior of $\overline{\mbox{Eff}}(Y)$, then $d(E)=n$.
The volume of the Okounkov body $\Delta_{Y_\bullet}(E)$, for a big divisor $E \in \Sigma$, 
encodes the asymptotics of the spaces of sections $H^0(Y, \mathcal{O}_Y(E)), \, k \in \N$, 
by the identity
\begin{align}
\lim_{k \rightarrow \infty}\frac{\mbox{dim}\, H^0(Y, \mathcal{O}_Y(kE))}{k^n}
=\mbox{Vol}_n(\Delta_{Y_\bullet}(E))
\label{E: volid}
\end{align}
(cf. \cite{LM09}). Hence, we have the following theorem. 

\begin{thm}
If $F \in C^G(X)$ is an ample divisor in the 
interior of the ample cone $C^G(X)$ which descends to a divisor $F_Y$ on $Y$,  
then 
\begin{align*}
\lim_{k \rightarrow \infty}\frac{\mbox{dim}\, H^0(X, \mathcal{O}_X(kF))^G}{k^n}
=\mbox{Vol}_n(p_2^{-1}(F_Y) \cap \Delta_{Y_\bullet}(Y)).
\end{align*}
\end{thm}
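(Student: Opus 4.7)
The plan is to reduce the statement to the standard volume identity for Okounkov bodies on $Y$ via the descent isomorphism established in Theorem \ref{T: Pic(Y)}.

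First, I would apply Theorem \ref{T: Pic(Y)}(i) to the divisors $kF$, $k \in \N$. Since $F$ descends to $F_Y$ on $Y$, so does each positive multiple $kF$, descending to $kF_Y = (kF)_Y$. This yields the chain of isomorphisms
\begin{align*}
H^0(X, \mathcal{O}_X(kF))^G \cong H^0(X^{ss}(L), \mathcal{O}_X(kF)\mid_{X^{ss}(L)})^G \cong H^0(Y, \mathcal{O}_Y(kF_Y))
\end{align*}
for every $k \in \N$, so the asymptotics on $X$ are controlled by the asymptotics of $H^0(Y, \mathcal{O}_Y(kF_Y))$ as $k \to \infty$.

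Next, I would verify that $F_Y$ is a big divisor on $Y$. By Theorem \ref{T: Pic(Y)}(ii), the map $\sigma\mid_{C^G(X)}: C^G(X) \to \overline{\mbox{Eff}}(Y)$ is a linear isomorphism of cones. Since $F$ lies in the interior of $C^G(X)$, its image $\sigma(F)$ (a positive rational multiple of $F_Y$) lies in the interior of $\overline{\mbox{Eff}}(Y)$; hence $F_Y$ itself is big. In particular the dimension $d(F_Y)$ of its Okounkov body equals $n = \dim Y$.

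With bigness of $F_Y$ in hand, I would invoke the Lazarsfeld--Musta\c{t}\u{a} volume identity \eqref{E: volid}, which gives
\begin{align*}
\lim_{k \to \infty} \frac{\dim H^0(Y, \mathcal{O}_Y(kF_Y))}{k^n} = \mbox{Vol}_n(\Delta_{Y_\bullet}(F_Y)).
\end{align*}
Finally, by the slicing identity \eqref{E: okslice}, extended to $\R$-divisors in \eqref{E: okrdiv}, the Okounkov body $\Delta_{Y_\bullet}(F_Y)$ is precisely the slice $p_2^{-1}(F_Y) \cap \Delta_{Y_\bullet}(Y)$. Combining these three steps yields the claimed equality.

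The main thing to check carefully is that the interior point $F$ of $C^G(X)$ indeed maps under $\sigma$ to a big class on $Y$, i.e.\ that the isomorphism of cones in Theorem \ref{T: Pic(Y)}(ii) preserves the property of being an interior point (so that $d(F_Y) = n$ and the $n$-dimensional volume is the relevant quantity). This follows from $\sigma$ being a linear isomorphism of full-dimensional cones, so it should be routine; the rest of the argument is essentially a transcription of the classical volume formula through the descent isomorphism.
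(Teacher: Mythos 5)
Your proposal is correct and follows essentially the same route as the paper: the descent isomorphism $H^0(X,\mathcal{O}_X(kF))^G \cong H^0(Y,\mathcal{O}_Y(kF_Y))$ combined with the volume identity \eqref{E: volid} and the slicing identity \eqref{E: okslice}. The only addition is your explicit check that $F_Y$ is big via the cone isomorphism of Theorem \ref{T: Pic(Y)}(ii), a detail the paper leaves implicit but which is needed to apply \eqref{E: volid} with the $n$-dimensional volume.
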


\begin{proof}
The claim follows immediately from the identities \eqref{E: okslice}, \eqref{E: volid}, and 
the isomorphism 
\begin{align*}
H^0(X, \mathcal{O}_Y(kF))^G \cong H^0(Y, \mathcal{O}_Y(kF_Y)), \quad k \in \N.
\end{align*}
\end{proof}

As an immediate corollary of the log concavity of the volume function 
in the interior of the pseudo-effective cone (\cite[p. 157]{Laz}, \cite[Cor. 4.12]{LM09}), 
we obtain the following generalization of Okounkov's result (\cite{Ok96}) on log concavity 
of asymptotic multiplicities.

\begin{thm}
The function 
\begin{align*}
f: \mbox{Int}\,(\overline{\mbox{Eff}}(Y)) \rightarrow \R, \quad f(\xi):=\log (\mbox{Vol}_n(\Delta_{Y_\bullet}(\xi)))
\end{align*}
(cf. \eqref{E: okrdiv}) is concave.
\end{thm}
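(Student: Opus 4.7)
The plan is to derive concavity of $f$ directly from the Brunn--Minkowski inequality applied to the slices $\Delta_{Y_\bullet}(\xi)$ of the global Okounkov body $\Delta_{Y_\bullet}(Y)$. This is the standard route to log-concavity of the volume function, and it is precisely how \cite[Cor.~4.12]{LM09} (which is cited in the theorem statement) is established; the only task is to check that the same argument goes through verbatim in the $\R$-divisor / slice setting assembled in this section.

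First I would observe that, since $\xi$ lies in the interior of $\overline{\mbox{Eff}}(Y)$, the class is big; by standard Okounkov-body theory (cf.\ \cite{LM09}) this makes $\Delta_{Y_\bullet}(\xi)$ a full-dimensional compact convex body in $\R^n$, so $\mbox{Vol}_n(\Delta_{Y_\bullet}(\xi))>0$ and $f(\xi)$ is finite. The key structural input is the Minkowski-type inclusion
\begin{align*}
t\,\Delta_{Y_\bullet}(\xi_1) + (1-t)\,\Delta_{Y_\bullet}(\xi_2) \;\subseteq\; \Delta_{Y_\bullet}\bigl(t\xi_1 + (1-t)\xi_2\bigr),
\end{align*}
valid for all $\xi_1,\xi_2 \in \mbox{Int}(\overline{\mbox{Eff}}(Y))$ and $t \in [0,1]$. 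This is immediate from the slice identity \eqref{E: okrdiv} together with the fact that $\Delta_{Y_\bullet}(Y)$ is a closed convex cone: if $(v_i,\xi_i) \in \Delta_{Y_\bullet}(Y)$ for $i=1,2$, their convex combination $(tv_1+(1-t)v_2,\,t\xi_1+(1-t)\xi_2)$ again lies in $\Delta_{Y_\bullet}(Y)$ and its second coordinate is $t\xi_1+(1-t)\xi_2$. Applying Brunn--Minkowski to the $n$-dimensional convex bodies on either side then gives concavity of the function $\mbox{Vol}_n(\Delta_{Y_\bullet}(\cdot))^{1/n}$, and the weighted AM--GM inequality $ta + (1-t)b \geq a^t b^{1-t}$ upgrades this $1/n$-concavity to log-concavity of $\mbox{Vol}_n(\Delta_{Y_\bullet}(\cdot))$, i.e., to concavity of $f$.

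There is no substantial obstacle here: the proof is essentially a single application of Brunn--Minkowski. The only step meriting any thought is the Minkowski additivity of the slices, and this is \emph{built into} the definition of $\Delta_{Y_\bullet}(Y)$ as a closed convex cone. In summary, the theorem is a formal consequence of (a) the construction of the global Okounkov body as a convex cone earlier in this section, and (b) the classical Brunn--Minkowski inequality, exactly as in \cite[Cor.~4.12]{LM09}.
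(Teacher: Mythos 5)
Your proof is correct and is essentially the argument the paper relies on: the paper simply cites the log-concavity of the volume function on the big cone (\cite[p.~157]{Laz}, \cite[Cor.~4.12]{LM09}), and that cited result is itself established by exactly the Brunn--Minkowski-on-slices argument you write out, using the convexity of the global Okounkov body to get the Minkowski inclusion $t\,\Delta_{Y_\bullet}(\xi_1)+(1-t)\,\Delta_{Y_\bullet}(\xi_2)\subseteq\Delta_{Y_\bullet}(t\xi_1+(1-t)\xi_2)$. Your version is merely more self-contained, unpacking the citation rather than invoking the identity $\mbox{Vol}_n(\Delta_{Y_\bullet}(\xi))=\mbox{vol}(\xi)/n!$.
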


\begin{rem} \label{R: shapeok}
So far in this section we have been working with an arbitrary admissible flag $Y_\bullet$, 
and the shape of the global Okounkov body $\Delta_{Y_\bullet}(Y)$ of course depends on this flag. 
An interesting question is now whether $Y$ admits an admissible flag $Y_\bullet$ for which 
$\Delta_{Y_\bullet}(Y)$ is a rational polyhedral cone. In particular, each single 
Okounkov body $\Delta_{Y_\bullet}(E)$, being a fibre of $\Delta_{Y_\bullet}(Y)$,  would then be a rational 
polyhedral polytope, and the asymptotics of the corresponding branching law would be (approximately at least)
given by counting integral points of this polytope. 

Note that we can at least make one slice of $\Delta_{Y_\bullet}(Y)$ rational polyhedral 
by a suitable choice of $Y_\bullet$. Indeed, if we choose the $Y_i$, for  $i=1,\ldots, n-1$, to 
be complete intersections of generic divisors in the linear system of the 
line bundle $L_0^q$ on $Y$, and the point $Y_n \in Y_{n-1}$ generically, 
then the Okounkov body of the divisor of $L_0^q$ is, up to scaling, a 
standard simplex (cf. \cite{sep}).
An interesting question is now for which other divisors on $Y$ a flag $Y_\bullet$ of this 
type gives a rational polyhedral Okounkov body. A natural first attempt would be to study the 
Okounkov bodies $\Delta_{Y_\bullet}(E)$ of divisors $E$ coming from the interior of the 
GIT-equivalence class $C$ of $L=\mathcal{O}_X(D)$. 

In this context, it may be very useful to know that $Y$ is a Mori dream space, since 
the pseudo-effective cone $\overline{\mbox{Eff}}(Y)$ exhibits a chamber structure 
similar to that of a smooth projective surface. It is therefore conceivable 
that the results in (\cite{SS14}), notably the construction of a Minkowski base for an Okounkov body, 
could be generalized to our setting, or even to general Mori dream spaces.
\end{rem}


\begin{thebibliography}{999999}

\bibitem[BS00]{BS}
Berenstein, A., Sjamaar, R.,
{\it Coadjoint orbits, moment polytopes, and the Hilbert-Mumford criterion}, 
Journ. Amer. Math. Soc. {\bf 13} (200), 2, 433--466


\bibitem[B86]{Br86}
Brion, M.,
{\it Sur l'image de l'application moment},
{S\'eminaire d'alg\`ebre {P}aul {D}ubreil et {M}arie-{P}aule
              {M}alliavin ({P}aris, 1986)},
{Lecture Notes in Math.},
{ \bf 1296}, {177--192},
{Springer},
{Berlin},
{1987}

\bibitem[DH98]{DH}
Dolgachev, I.V., Hu, Y.,
{\it Variation of geometric invariant theory quotients},
Publ. Math. I.H.E.S. {\bf 87} (1998), 5--51

\bibitem[\'E92]{E}
\'Elashvili, A-G.,
{\it Invariant algebras}, 
Lie groups, their discrete subgroups, and invariant theory, 
Adv. SovietMath. vol 8, Amer. Math. Soci., Providence, RI, 1992, 57--64


\bibitem[HK00]{HK}
Hu, Y., Keel, S.,
{\it Mori Dream Spaces and GIT}, 
Michigan Math. J. {\bf 48} (2000), 331--348




\bibitem[KK09]{KK09}
{Kaveh, K., Khovanskii, A.G.},
{\it Newton convex bodies, semigroups of integral points, graded algebras 
and intersection theory}, Ann. of Math. (2) {\bf 176} (2012), 2, 925--978

\bibitem[KK10]{KK10}
{Kaveh, K., Khovanskii, A.G.},
{\it Convex bodies associated to actions of reductive groups},
Mosc. Math. J. {\bf 12} (2012), 2, 369--396, 461

\bibitem[KKV89]{KKV}
 {Knop, F., Kraft, H., Vust, T.},
 ``The {P}icard group of a {$G$}-variety'',
{Algebraische {T}ransformationsgruppen und
              {I}nvariantentheorie},
DMV Sem. {\bf 13}, 77--87, Birkh\"auser, Basel, 1989

\bibitem[K70]{Kos}
Kostant, B., 
``Quantization and unitary representations'', 
Lecture Notes in Mathematics {\bf 170}, Springer, Berlin, 1970

\bibitem[L04]{Laz}
{Lazarsfeld, R.,}
``Positivity in Algebraic Geometry I'', Springer, 2004

\bibitem[LM09]{LM09}
{Lazarsfeld, R., Musta{\c{t}}{\u{a}}, M.},
{\it Convex bodies associated to linear series},
{Ann. Sci. \'Ec. Norm. Sup\'er.} {\bf 42}
(2009), 783--835

\bibitem[M11]{M1}
{Manon, C.}, {\it Toric degenerations and tropical geometry of branching algebras}, 
preprint, http://arxiv.org/abs/1103.2484

\bibitem[M14]{M2}
{Manon, C.}, {\it Newton-Okounkov polyhedra for character varieties and configuration spaces},
preprint, http://arxiv.org/abs/1403.3990

\bibitem[O96]{Ok96}
{Okounkov, A.},
{\it Brunn-{M}inkowski inequality for multiplicities},
{Invent. Math.} {\bf 125} (1996), 405--411

\bibitem[R00]{R00}
Ressayre, N., 
{\it The GIT-equivalence for G-line bundles},
Geom. Dedicata {\bf 81} (2000), 1-3, 295--324

\bibitem[R10]{R10}
Ressayre, N., 
{\it Geometric Invariant Theory and Generalized Eigenvalue Problem},
Invent. math. {\bf 180} (2010), 2, 389--441


\bibitem[SS14]{SS14}
Schmitz, D., Sepp\"anen, H., 
{\it On the polyhedrality of global Okounkov bodies},
preprint, 2014, http://arxiv.org/abs/1403.4517

\bibitem[Se12]{sep}
Sepp\"anen, Henrik,
{\it Okounkov bodies for ample line bundles},
preprint, 2012 (v3),  arxiv.org/abs/1007.1915


\bibitem[Sj95]{S95}
Sjamaar, R.,
{\it Holomorphic slices, symplectic reduction and multiplicities of
              representations},
Ann. of Math. (2) {\bf 141} (1995), no. 1,
87--129

\end{thebibliography}
\end{document}